\documentclass[11pt,a4paper]{article}

\usepackage{amssymb,amsfonts,amsmath,amsthm,cite}
\usepackage{graphicx,epsfig,subfigure}
\usepackage{enumerate}
\usepackage{dsfont}
\usepackage{color,xcolor}
\allowdisplaybreaks[4]
\parskip=8pt
\hoffset -25truemm \oddsidemargin=25truemm
\evensidemargin=25truemm \textwidth=155truemm \voffset -25truemm
\topmargin=25truemm \headheight=7truemm \headsep=0truemm
\textheight=225truemm \baselineskip=16pt
\newtheorem{thm}{Theorem}[section]

\newtheorem{lem}{Lemma}[section]

\newtheorem{conj}{Conjecture}[section]


\makeatletter \@addtoreset{equation}{section}
\def\qed{\hfill \rule{4pt}{7pt}}
\def\hf{\mathcal{F}}

\def\ha{\mathcal{A}}
\def\hb{\mathcal{B}}

\def\ex{\mathcal{\mathbb{E}}}
\def\Var{\mathrm{Var}}


\begin{document}

\title{Extremal Problem for Matchings and Rainbow Matchings on Direct Products}

\author{Jian Wang$^1$, Jie You$^2$\\[5pt]
$^1$Department of Mathematics\\
Taiyuan University of Technology\\
Taiyuan 030024, P. R. China\\[6pt]
$^2$Center for Applied Mathematics\\
Tianjin University\\
Tianjin 300072, P. R. China\\[6pt]
E-mail:  $^1$wangjian01@tyut.edu.cn, $^2$yj\underline{ }math@tju.edu.cn
}
\date{}
\maketitle

\begin{abstract}
Let $n_1,\dots,n_\ell,k_1,\dots,k_\ell$ be integers and let $V_1,\dots,V_\ell$ be disjoint sets with $|V_i|=n_i$ for $i=1,\dots,\ell$. Define $\sqcup_{i=1}^\ell \binom{V_i}{k_i}$ as the collection of all subsets $F$ of $\cup_{i=1}^\ell V_i$ with $|F\cap V_i| =k_i$ for each $i=1,\dots,\ell$. In this paper, we show that if the matching number of  $\hf\subseteq \sqcup_{i=1}^\ell \binom{V_i}{k_i}$ is at most $s$ and $n_i\geq 4\ell^2 k_i^2s$ for all $i$, then $|\hf| \leq \max_{1\leq i\leq \ell}[\binom{n_i}{k_i}-\binom{n_i-s}{k_i}]\prod_{j\neq i}\binom{n_j}{k_j}$. Let $\hf_1,\hf_2,\dots,\hf_s\subseteq\sqcup_{i=1}^\ell \binom{V_i}{k_i}$ with $n_i\geq 8\ell^2k_i^2s$ for all $i$. We also prove that if $\hf_1,\hf_2,\dots,\hf_s$ are rainbow matching free, then there exists $t$ in $[s]$ such that
$|\hf_t|\leq  \max_{1\leq i\leq \ell}\left[\binom{n_i}{k_i}-\binom{n_i-s+1}{k_i}\right]\prod_{j\neq i}\binom{n_j}{k_j}.$
\end{abstract}
\section{Introduction}

Let $[n]=\{1,2,\dots,n\}$ and $\binom{[n]}{k}$ be the collection of all its $k$-subsets. For a $k$-graph $\hf\subset {[n]\choose k}$, the matching number of $\hf$, denote by $\nu(\hf)$, is the maximum number of pairwise disjoint members of $\hf$. Let $\hf_1, \hf_2,\dots, \hf_s\subseteq \binom{[n]}{k}$. We say that $\mathcal{F}_1, \mathcal{F}_2, \dots, \mathcal{F}_{s}$ contain a rainbow matching if there exist $s$ pairwise disjoint sets $F_1\in \mathcal{F}_1, F_2\in\mathcal{F}_2, \dots, F_s\in\mathcal{F}_s$.

One of the most important open problems in extremal set theory is to determine the maximum of $|\hf|$ subject to $\nu(\hf)\leq s$, which is known as the celebrated Erd\H{o}s matching conjecture.

\begin{conj}[Erd\H{o}s Matching Conjecture \cite{erdHos1965problem}]\label{conjEMC}
Let $n\geq (s+1)k$ and $\hf\subseteq\binom{[n]}{k}$. If $\nu(\hf)\leq s$, then
\begin{align}
|\hf|\leq \max\left\{\binom{k(s+1)-1}{k},\binom{n}{k}-\binom{n-s}{k}\right\}.\label{EMC}
\end{align}
\end{conj}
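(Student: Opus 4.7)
The plan begins with an honest admission that Conjecture \ref{conjEMC} is one of the most stubborn open problems in extremal set theory and has been verified only in restricted ranges (notably for $n$ large compared to $ks$, by Erd\H{o}s, Bollob\'as--Daykin--Erd\H{o}s, Frankl, and Frankl--Kupavskii). What follows is the strategy that succeeds in that large-$n$ regime, which is also the setting in which the present paper's main theorems operate. First I would pass to a shifted family by iterating the shift $S_{ij}$ for $i<j$, which replaces each $F\in\hf$ containing $j$ but not $i$ by $(F\setminus\{j\})\cup\{i\}$ whenever the latter is not already present; this preserves $|\hf|$ and does not increase $\nu(\hf)$, so without loss of generality I may assume $\hf$ is left-shifted.

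Second, I would apply a structural dichotomy on the shifted family. If every edge of $\hf$ meets $[s]$, then $\hf\subseteq\{F:F\cap[s]\neq\emptyset\}$, giving directly $|\hf|\le\binom{n}{k}-\binom{n-s}{k}$. Otherwise there exists $F_0\in\hf$ with $F_0\cap[s]=\emptyset$, and I would exploit shiftedness together with $\nu(\hf)\le s$ to force every edge to lie inside the initial segment $[k(s+1)-1]$, yielding the other candidate bound $\binom{k(s+1)-1}{k}$. The implementation of the second branch is via a shifting chain: given a hypothetical edge $E$ with $\max E\ge k(s+1)$, one shifts it downward and combines it with $F_0$ and an $s$-matching inside $[k(s+1)-1]$ to build an $(s+1)$-matching, contradicting $\nu(\hf)\le s$.

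The main obstacle is closing the gap in the intermediate range $(s+1)k\le n\lesssim C ks$ for small $C$, where neither branch of the dichotomy is individually tight and one cannot conclude by shiftedness alone. In that regime I would fall back on quantitative shadow estimates in the spirit of Bollob\'as--Daykin--Erd\H{o}s, or on the sharper chain and junta-type arguments of Frankl and Frankl--Kupavskii, typically combined with an induction on $s$ applied to the link families $\{F\setminus\{i\}:i\in F\in\hf\}\subseteq\binom{[n]\setminus\{i\}}{k-1}$, for which $\nu\le s-1$. For the subsequent results of this paper, where $n_i$ is taken polynomially larger than $k_i$ and $s$, the shifting-plus-dichotomy step already delivers a bound of the right shape, and the delicate shadow analysis needed for the tight version of \eqref{EMC} can be avoided.
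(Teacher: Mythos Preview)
The statement you are addressing is labeled a \emph{Conjecture} in the paper, and the paper does not attempt to prove it. It is quoted as motivation and background; the paper only records the known partial results (the cases $k=2,3$, and the validity of \eqref{EMC} for $n>(2s+1)k$ and for $n>\tfrac{5}{3}sk$ with $s$ large). There is therefore no ``paper's own proof'' to compare your proposal against. You recognise this at the outset, which is appropriate.

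That said, your sketched dichotomy has a genuine gap. In the second branch you claim that if some $F_0\in\hf$ misses $[s]$, then shiftedness plus $\nu(\hf)\le s$ forces \emph{every} edge into $[k(s+1)-1]$. This is false even for shifted families: take $\hf=\{F\in\binom{[n]}{k}:|F\cap[2s]|\ge 2\}$ with $k\ge 2$. This family is shifted, has $\nu(\hf)\le s$ (each edge uses two points of $[2s]$), contains the edge $\{s+1,\dots,s+k\}$ disjoint from $[s]$, yet also contains edges such as $\{1,2,n-k+3,\dots,n\}$ whose maximum is far beyond $k(s+1)-1$. Your proposed mechanism --- shift a large edge down, combine with $F_0$ and an $s$-matching in $[k(s+1)-1]$ --- does not go through, because from a single $F_0$ with $F_0\cap[s]=\emptyset$ you cannot manufacture the required $s$-matching inside the initial segment. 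If this dichotomy were valid, the Erd\H{o}s Matching Conjecture would be a one-paragraph exercise.

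The actual proofs in the large-$n$ regime (Bollob\'as--Daykin--Erd\H{o}s, Frankl) do not proceed via this dichotomy; they bound $|\hf|$ directly by $\binom{n}{k}-\binom{n-s}{k}$ using induction on $s$ through the link at the first vertex, together with a covering argument when $\nu(\hf(\bar 1))=s$. Your final paragraph gestures in that direction, and that is indeed the shape of the argument the present paper adapts to direct products in Lemma~\ref{lem3}; but you should drop the faulty branch~2 rather than present it as the core strategy.
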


The two simple constructions showing that \eqref{EMC} is best possible (if true) are
$$\mathcal{E}(n,k,s)=\left\{E\in{[n]\choose k}\colon E\cap [s]\neq \emptyset\right\} \text{ and } {[(s+1)k-1]\choose k}.$$
The case $s=1$ is the classical Erd\H{o}s-Ko-Rado Theorem \cite{katona1964intersection}. Erd\H{o}s and Gallai \cite{erdHos1959maximal} proved \eqref{EMC} for $k=2$. The $k=3$ case is settled in \cite{Frankl2017On}. For general $k$,  Erd\H{o}s proved that \eqref{EMC} is true and up to isomorphic $\mathcal{E}(n,k,s)$ is the only optimal family provided that $n>n_0(k,s)$. The bounds for $n_0(k,s)$ were subsequently improved by Bollob\'{a}s, Daykin and Erd\H{o}s \cite{bollob1976sets}, Huang, Loh and Sudakov \cite{huang2012size}. The current best bounds establish \eqref{EMC} for $n>(2s+1)k$ (\hspace{-0.001em}\cite{frankl2013improved}) and for $s>s_0$, $n>\frac{5}{3}sk$ (\hspace{-0.001em}\cite{frankl2018erd}).

 A rainbow version of Erd\H{o}s matching conjecture was proposed by Aharoni, Howardand \cite{aharoni2017rainbow} and independently Huang, Loh, Sudakov \cite{huang2012size}.

\begin{conj}[\hspace{-0.001em}\cite{aharoni2017rainbow,huang2012size}]\label{conjrEMC}
  Let $\hf_1,\hf_2,\dots,\hf_s\subseteq\binom{[n]}{k}$. If
\begin{align}\label{conj-2}
|\hf_i|> \max\left\{\binom{ks-1}{k},\binom{n}{k}-\binom{n-s+1}{k}\right\},
\end{align}
for all $1\le i\le s$, then $\hf_1,\hf_2,\dots,\hf_s$ contain a  rainbow matching.
\end{conj}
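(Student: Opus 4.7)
Since Conjecture~\ref{conjrEMC} remains open in full generality and has been verified only in the regime of sufficiently large $n$ (and for certain small $s,k$), my plan addresses the range where the second term $\binom{n}{k}-\binom{n-s+1}{k}$ dominates the maximum in \eqref{conj-2}. The overall strategy is to import the shifting proof of the classical Erd\H{o}s matching conjecture and layer a rainbow coloring argument on top of it.

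\textbf{Step 1 (simultaneous shifting).} For each pair $1\le i<j\le n$ I would apply the shift $s_{ij}$ in parallel to all of $\hf_1,\dots,\hf_s$. The key claim is that this parallel shift preserves being rainbow-matching-free. A hypothetical rainbow matching $F_1',\dots,F_s'$ in the shifted families can be un-shifted color by color: pick a $t$ with $F_t'\notin\hf_t$, replace $F_t'$ by its pre-image in $\hf_t$, and, using an exchange argument in the spirit of Hilton, check that the modified tuple is still a rainbow matching. Iterating, one may assume every $\hf_t$ is left-shifted.

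\textbf{Step 2 (reduce to bounding the union).} Set $\hh=\hf_1\cup\cdots\cup\hf_s$. If $\nu(\hh)\le s-1$, then the EMC bound for large $n$ proved in \cite{frankl2013improved,frankl2018erd} gives $|\hh|\le\binom{n}{k}-\binom{n-s+1}{k}$. Since $|\hh|\ge\max_t|\hf_t|$, some $\hf_t$ already falls below the conjectured threshold, contradicting \eqref{conj-2}. Hence $\nu(\hh)\ge s$, i.e.\ a matching of size $s$ sits inside the union.

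\textbf{Step 3 (stability and rainbow extraction).} Apply the stability version of EMC to each shifted $\hf_t$: a shifted family exceeding the threshold must be essentially the star $\{F:F\cap[s-1]\ne\emptyset\}$, up to a small deviation. Classify each $\hf_t$ as \emph{star-like} or \emph{matching-rich}. Matching-rich families supply disjoint edges for the rainbow matching via a greedy alternating-path procedure anchored on the matching in $\hh$ produced by Step~2. For star-like families, a pigeonhole argument on the pivotal vertices in $[s-1]$ forces one color class to miss a common vertex, which opens the door to induction on $s$.

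The principal obstacle is Step~1: parallel shifting for rainbow matchings is markedly more delicate than for a single family, because replacing a shifted edge by its pre-image may create new conflicts with the remaining colors, so a careful inductive replacement scheme is required and this is where most of the technical work concentrates. A secondary obstacle is obtaining sharp enough constants in Step~3 to hit the tight threshold $\binom{n}{k}-\binom{n-s+1}{k}$ rather than a weaker polynomial bound; here one needs a precise stability theorem for shifted matching-free families, not merely the extremal count.
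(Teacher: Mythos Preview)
The paper does not prove this statement: Conjecture~\ref{conjrEMC} is presented as an open problem, with the paper merely surveying the partial results known for large $n$ before moving on to its own theorems about direct products. There is no ``paper's own proof'' to compare against.

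Regarding your outline itself, you have misidentified the principal obstacle. Step~1 is not where the difficulty lies: the fact that simultaneous shifting preserves the rainbow-matching-free property is exactly Lemma~\ref{shifted-rainbow} in the paper, due to Huang--Loh--Sudakov~\cite{huang2012size}, and needs no new argument. Step~2 is also sound in the large-$n$ regime you target, since $|\hh|\ge\max_t|\hf_t|$ and the cited EMC bounds apply to $\hh$ with matching number $\le s-1$. The genuine gap is Step~3. Knowing $\nu(\hh)\ge s$ for the union $\hh$ says nothing about a \emph{rainbow} matching, since all $s$ disjoint edges could come from a single $\hf_t$; your ``greedy alternating-path procedure anchored on the matching in $\hh$'' and ``pigeonhole argument on the pivotal vertices in $[s-1]$'' are not specified at all, and it is precisely here that every known partial proof (junta method, concentration inequalities, absorbing) does its substantive work. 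Moreover, the dichotomy ``star-like versus matching-rich'' is not well-defined: a shifted family exceeding the threshold need not be close to $\{F:F\cap[s-1]\neq\emptyset\}$ unless one already has a stability theorem sharp at the exact bound, which is essentially equivalent to the conjecture itself in strength. Without a concrete mechanism in Step~3, the outline does not yield a proof for any range of $n$.
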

\noindent Using a junta method, Keller and Lifshitz \cite{keller2017junta} showed the validity of \eqref{conj-2} for $n>f(s)k$, where $f(s)$ is an unspecified and a very quickly growing function of $s$. In \cite{frankl2019simple}, Frankl and Kupavskii improved the bound to $12sk(2+\log s)$. By using the absorbing method, Lu, Wang and Yu \cite{lu2020better} proved \eqref{conj-2} for $n\geq 2k(s+1)$ and $s>s_0(k)$. By applying the concentration inequality for random matchings developed in \cite{frankl2018erd}, Kupavskii\cite{kupavskii2021rainbow}  proved \eqref{conj-2} for $n\geq 3e(s+1)k$ and $s>10^7$.

Let $n_1,\dots,n_\ell,k_1,\dots,k_\ell$ be integers with $n_1/k_1\leq \dots \leq n_\ell/k_\ell$ and let $V_1,\dots,V_\ell$ be pairwise disjoint sets with $|V_i|=n_i$. Define the direct product $\sqcup_{i=1}^\ell \binom{V_i}{k_i}$ as the collection of all subsets $F$ of $\cup_{i=1}^\ell V_i$ with $|F\cap V_i|= k_i$ for $i=1,\dots,\ell$. By the algebraic method, Frankl\cite{frankl1996erdos} proved an Erd\H{o}s-Ko-Rado theorem for direct products as follows:

\begin{thm}[\hspace{-0.001em}\cite{frankl1996erdos}]\label{EKR-directprod}
Suppose that $\hf\subseteq \sqcup_{i=1}^\ell \binom{V_i}{k_i}$ is intersecting and $2\leq n_1/k_1\leq \dots \leq n_\ell/k_\ell$. Then
\[
|\hf| \leq \frac{k_1}{n_1}\prod_{i=1}^\ell\binom{n_i}{k_i}.
\]
\end{thm}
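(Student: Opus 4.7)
The natural approach is to adapt Katona's cyclic permutation method to the direct product. Independently choose a uniformly random cyclic ordering $\sigma_i$ of $V_i$ for each $i\in[\ell]$, and call $F\in\hf$ a \emph{$\sigma$-interval} if $F\cap V_i$ consists of $k_i$ cyclically consecutive elements of $\sigma_i$ for every $i$. Let $\mathcal{I}(\sigma)\subseteq\hf$ denote the set of $\sigma$-intervals. A standard count shows that each fixed $F$ is a $\sigma$-interval with probability $\prod_i n_i/\binom{n_i}{k_i}$, so by linearity of expectation
\[
\mathbb{E}\bigl[|\mathcal{I}(\sigma)|\bigr]\;=\;|\hf|\prod_{i=1}^{\ell}\frac{n_i}{\binom{n_i}{k_i}}.
\]

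The theorem then reduces to the pointwise \emph{key lemma}: for every $\sigma$, $|\mathcal{I}(\sigma)|\le k_1\prod_{j=2}^{\ell} n_j$. Combining this with the expectation identity gives $|\hf|\le (k_1/n_1)\prod_i\binom{n_i}{k_i}$, which is exactly the claimed bound. Identifying each $\sigma$-interval with its tuple of starting positions $(p_1,\dots,p_\ell)\in[n_1]\times\cdots\times[n_\ell]$, two tuples correspond to disjoint subsets of $\cup V_i$ iff the cyclic distance $d_{n_i}(p_i,q_i)\ge k_i$ in every coordinate. To prove the lemma I would induct on $\ell$. The base case $\ell=1$ is Katona's classical observation that an intersecting family of $k_1$-intervals on a cycle of length $n_1\ge 2k_1$ has at most $k_1$ members. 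For the inductive step, project onto the first coordinate and split into two cases: (A) $\pi_1(\mathcal{I}(\sigma))$ lies in an arc of $k_1$ consecutive positions, in which case $|\mathcal{I}(\sigma)|\le k_1\prod_{j\ge 2} n_j$ immediately; or (B) there exist $p,p'\in\pi_1(\mathcal{I}(\sigma))$ with $d_{n_1}(p,p')\ge k_1$, in which case the fibres $\mathcal{I}_p,\mathcal{I}_{p'}\subseteq[n_2]\times\cdots\times[n_\ell]$ are \emph{cross-intersecting} interval families.

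The bulk of the work lies in Case (B): it requires a cross-intersecting analogue of the key lemma on a product of $\ell-1$ cycles. I would handle this by strengthening the inductive hypothesis to bound any pair $(\mathcal{B},\mathcal{B}')$ of cross-intersecting interval families in $[n_2]\times\cdots\times[n_\ell]$, using the fact that two far-apart $p,p'$ force the fibres to be jointly concentrated in a small region (a Hilton--Milner-type stability). The ordering hypothesis $n_1/k_1\le n_2/k_2\le\cdots\le n_\ell/k_\ell$ enters crucially here: it guarantees both that the base cycle $V_2$ still satisfies $n_2\ge 2k_2$ so that the induction can be applied, and that the inductive bound $k_2\prod_{j\ge 3} n_j$ on an intersecting family in $\ell-1$ coordinates is dominated by $(k_1/n_1)\prod_{j\ge 2} n_j$. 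Calibrating this cross-intersecting estimate so that summing over all fibres of $\pi_1$ never exceeds $k_1\prod_{j\ge 2}n_j$ is the principal technical obstacle; once it is in hand, the rest of the argument is the averaging bookkeeping displayed above.
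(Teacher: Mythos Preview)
The paper does not prove this theorem; it quotes it from Frankl's 1996 paper, whose proof is by the \emph{algebraic method}: regard the disjointness graph on $\sqcup_i\binom{V_i}{k_i}$ as the Kronecker product of the Kneser graphs $KG(n_i,k_i)$, read off its eigenvalues from Lov\'asz's spectrum (this is exactly the observation the present paper re-uses in Section~3), and apply the Hoffman ratio bound for independent sets. Once the spectrum is known, the bound $|\hf|\le (k_1/n_1)\prod_i\binom{n_i}{k_i}$ drops out in one line.

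Your Katona cyclic-permutation approach is a genuinely different, more elementary strategy, and the reduction to the key lemma together with the averaging bookkeeping is correct. If the key lemma holds, the theorem follows exactly as you say.

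There is, however, a real gap in your proposed proof of the key lemma at Case~(B). Locating two first-coordinate positions $p,p'$ with $d_{n_1}(p,p')\ge k_1$ only tells you that the two fibres $\mathcal{I}_p,\mathcal{I}_{p'}$ are cross-intersecting in the remaining $\ell-1$ coordinates; it imposes no constraint on the other $n_1-2$ fibres. In particular a position $q$ cyclically close to both $p$ and $p'$ (for instance $q=1$ when $n_1=2k_1$, $p=0$, $p'=k_1$, $k_1\ge 2$) has a fibre $\mathcal{I}_q$ that need not be cross-intersecting with either $\mathcal{I}_p$ or $\mathcal{I}_{p'}$, so the Hilton--Milner-type stability you invoke cannot by itself control $\sum_q|\mathcal{I}_q|$. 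Strengthening the inductive hypothesis to a cross-intersecting statement about a \emph{single pair} of families is therefore not enough; you would need a statement about an entire system of fibres indexed by $[n_1]$, with pairwise cross-intersection constraints governed by the cycle metric, and it is not clear such a statement both holds and inducts cleanly. A Katona-type proof of the key lemma does exist, but it proceeds differently: one uses the hypothesis $n_1/k_1\le n_j/k_j$ to thread ``diagonal'' cycles of length $n_1$ through $\prod_i[n_i]$ so that first-coordinate distance $\ge k_1$ forces distance $\ge k_j$ in every coordinate, and then applies the $\ell=1$ Katona bound on each diagonal. Your inductive scheme, as written, does not close.
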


In this paper, we mainly consider a direct product version of Erd\H{o}s matching conjecture.  We determine the maximum number of edges in a subfamily of the direct product with matching number at most $s$ for each $n_i$ sufficiently large.

\begin{thm}\label{main-1}
Let $\ell,s,n_1,\dots,n_\ell,k_1,\dots,k_\ell$ be integers with $n_i\geq 4\ell^2k_i^2s$ for $i=1,\dots,\ell$. Let $\hf\subseteq \sqcup_{i=1}^\ell \binom{V_i}{k_i}$ with $|V_i|=n_i$. If $\nu(\hf)\leq s$, then
\begin{align}\label{eq-main-1}
|\hf| \leq \max_{1\leq i\leq \ell}\left[\binom{n_i}{k_i}-\binom{n_i-s}{k_i}\right]\prod_{j\neq i \atop 1\leq j\leq \ell}\binom{n_j}{k_j}.
\end{align}
\end{thm}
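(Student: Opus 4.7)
My approach is to reduce to the case where $\hf$ is stable under coordinatewise shifting operations and then establish a kernel structure: the family must be contained in a ``star'' on the first $s$ elements of one particular part $V_{i^*}$.

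For $u,v\in V_i$ with $u<v$, let $s_{u,v}$ be the shift that replaces each $F\in\hf$ containing $v$ but not $u$ by $(F\setminus\{v\})\cup\{u\}$ whenever the latter is not already in $\hf$. Iterating over all $i$ and all such pairs preserves $|\hf|$ and does not increase $\nu(\hf)$, so I may assume $\hf$ is stable under every $s_{u,v}$ within every $V_i$; identify each $V_i$ with $\{1,\dots,n_i\}$ and write $[s]_{V_i}:=\{1,\dots,s\}\subseteq V_i$. The heart of the proof is the following claim: there exists $i^*\in[\ell]$ such that every $F\in\hf$ satisfies $F\cap[s]_{V_{i^*}}\neq\emptyset$. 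Granted this, $\hf\subseteq\{F:F\cap[s]_{V_{i^*}}\neq\emptyset\}$, so
\[
|\hf|\le\Bigl[\binom{n_{i^*}}{k_{i^*}}-\binom{n_{i^*}-s}{k_{i^*}}\Bigr]\prod_{j\neq i^*}\binom{n_j}{k_j},
\]
and maximizing over $i^*$ yields exactly \eqref{eq-main-1}.

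To prove the claim, I argue by contradiction: if it fails, then for every $i\in[\ell]$ there is some $F^{(i)}\in\hf$ with $F^{(i)}\cap V_i\subseteq\{s+1,\dots,n_i\}_{V_i}$. Iterated shifting within $V_j$ (for $j\neq i$) and further downward shifting within $V_i$ (stopping short of $[s]_{V_i}$) produces a canonical set $G^{(i)}\in\hf$ of the form
\[
G^{(i)}\cap V_i=\{s+1,\dots,s+k_i\}_{V_i},\qquad G^{(i)}\cap V_j=\{1,\dots,k_j\}_{V_j}\ \ (j\neq i).
\]
Using the $G^{(i)}$'s as templates together with the further shifts available in a shifted family, I then construct $s+1$ pairwise disjoint sets $E_1,\dots,E_{s+1}\in\hf$, contradicting $\nu(\hf)\le s$. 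The hypothesis $n_i\ge 4\ell^2k_i^2s$ enters here to guarantee enough room in each $V_i$ for the required disjoint consecutive blocks.

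The principal difficulty is the $(s+1)$-matching construction itself. The canonical sets $G^{(i)}$ all share the ``low'' block $\{1,\dots,k_m\}_{V_m}$ in every part $V_m$ with $m\neq i$, so shifts issued from a single template produce only sets that overlap in most parts. Overcoming this requires interleaving shifts across several $G^{(i)}$'s and rerouting the overlapping low blocks into pairwise disjoint consecutive blocks of the form $\{(r-1)k_m+1,\dots,rk_m\}_{V_m}$; the factor $\ell^2$ in the hypothesis $n_i\ge 4\ell^2k_i^2s$ reflects both the number of parts that must be coordinated and the amount of rerouting needed inside each part. Verifying that every constructed $E_r$ is componentwise dominated by a set already known to lie in $\hf$ (so that, by shiftedness, $E_r$ itself lies in $\hf$) is the most delicate bookkeeping step, and I expect it to be the main technical hurdle of the proof.
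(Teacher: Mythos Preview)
Your central structural claim---that a shifted $\hf$ with $\nu(\hf)\le s$ is contained in some star $\{F:F\cap[s]_{V_{i^*}}\neq\emptyset\}$---is false, and the contradiction you sketch cannot be completed. For $\ell=1$ and $k_1\ge 2$, take $\hf=\binom{[s+k_1]}{k_1}\subseteq\binom{[n_1]}{k_1}$: it is shifted, has $\nu(\hf)=\lfloor(s+k_1)/k_1\rfloor\le s$, yet $\{s+1,\dots,s+k_1\}\in\hf$ misses $[s]$; the hypothesis $n_1\ge 4k_1^2 s$ plays no role. The obstruction in your $(s+1)$-matching step is intrinsic to the templates: if $E$ is componentwise dominated by $G^{(i)}$, then $E\cap V_j=\{1,\dots,k_j\}$ for every $j\neq i$, since $G^{(i)}\cap V_j=\{1,\dots,k_j\}$ is already the minimal $k_j$-block. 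Thus two candidates $E_r,E_{r'}$ dominated by $G^{(i_r)},G^{(i_{r'})}$ agree on every part $V_m$ with $m\notin\{i_r,i_{r'}\}$; for $\ell\ge 3$ no two such sets are disjoint, and for $\ell=2$ at most two are. Your proposed rerouting to blocks $\{(r-1)k_m+1,\dots,rk_m\}$ is impossible because shifts only move elements \emph{downward}, while for $k_m\ge 2$ these blocks lie above the largest element $s+k_m$ occurring in any $G^{(i)}$.

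The paper's argument is quantitative rather than structural. A coarse bound $|\hf|\le\frac{(s+1)k_1}{n_1}\prod_j\binom{n_j}{k_j}$ is first obtained (Lemma~\ref{lem2}) by averaging over a random matching in $\sqcup_{i\ge 2}\binom{V_i}{k_i}$ and invoking a result on $s$-overlapping families (Lemma~\ref{lem1}). One then inducts on $s$ (Lemma~\ref{lem3}): either deleting $v_{i,1}$ drops the matching number and induction handles $\hf(\overline{v_{i,1}})$, or an $s$-matching survives every such deletion, in which case $|\hf|$ is bounded by summing link sizes over the vertex cover provided by a shifted $s$-matching, with the deeper links controlled via the coarse bound. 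A final one-variable concavity calculation shows $\min_{x_1+\cdots+x_\ell=s}\prod_j\binom{n_j-x_j}{k_j}$ is attained when some $x_i=s$, converting the bound of Lemma~\ref{lem3} into \eqref{eq-main-1}.
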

Let $S_i$ be an $s$-subset of $V_i$. Set $\vec{ n}=(n_1,n_2,\dots,n_\ell)$ and $\vec{k}=(k_1,k_2,\dots,k_\ell)$. Define
\[
\mathcal{E}_i(\vec{\mathstrut n},\vec{\mathstrut k},s)=\left\{E\in \bigsqcup_{j=1}^\ell \binom{V_j}{k_j}\colon E\cap S_i\neq \emptyset\right\}.
\]
The constructions $\mathcal{E}_i(\vec{\mathstrut n},\vec{\mathstrut k},s)$ show that \eqref{eq-main-1} is best possible.

By the random matching technique developed by Frankl and Kupavskii\cite{frankl2018erd}, we obtain a condition on the number of edges for a family of direct products to contain a rainbow matching for each $n_i$ sufficiently large.

\begin{thm}\label{main-2}
Let $\ell,s,n_1,\dots,n_\ell,k_1,\dots,k_\ell$ be integers with  $n_i\geq 8\ell^2k_i^2s$ for $i=1,\dots,\ell$. Let  $\hf_1,\hf_2,\dots,\hf_s\subseteq \sqcup_{i=1}^\ell \binom{V_i}{k_i}$ with $|V_i|=n_i$. If $\hf_1,\hf_2,\dots,\hf_s$ are rainbow matching free, then there exists $1\leq t\leq s$ such that
\begin{align}\label{eq-main-2}
  |\hf_t|\leq  \max_{1\leq i\leq \ell}\left[\binom{n_i}{k_i}-\binom{n_i-s+1}{k_i}\right]\prod_{j\neq i\atop 1\leq j\leq \ell}\binom{n_j}{k_j}.
\end{align}
The constructions $\hf_1=\hf_2=\dots=\hf_s=\mathcal{E}_i(\vec{\mathstrut n},\vec{\mathstrut k},s)$ show that \eqref{eq-main-2} is best possible.

\end{thm}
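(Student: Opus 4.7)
Arguing by contradiction, suppose that $|\hf_t|>B$ for every $t\in[s]$, where
\[
B:=\max_{1\leq i\leq \ell}\left[\binom{n_i}{k_i}-\binom{n_i-s+1}{k_i}\right]\prod_{j\neq i}\binom{n_j}{k_j}.
\]
The goal is to exhibit a rainbow matching and reach a contradiction.

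\emph{Step 1 (reduction to $\nu(\hf_t)\geq s$).} Observe that $B$ is precisely the right-hand side of \eqref{eq-main-1} with $s$ replaced by $s-1$, and that $n_i\geq 8\ell^2k_i^2 s\geq 4\ell^2k_i^2(s-1)$. Hence Theorem~\ref{main-1} applied to each $\hf_t$ with parameter $s-1$ upgrades $|\hf_t|>B$ into $\nu(\hf_t)\geq s$, so every family contains a matching of size $s$.

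\emph{Step 2 (random matching and Hall's theorem).} For each $i\in[\ell]$, let $\pi_i$ be an independent, uniformly random permutation of $V_i$, and set $R=\min_i\lfloor n_i/k_i\rfloor$. For $r\in[R]$ define the pairwise disjoint sets
\[
E_r=\bigcup_{i=1}^\ell \pi_i(\{(r-1)k_i+1,\ldots,rk_i\})\in \bigsqcup_{i=1}^\ell\binom{V_i}{k_i},
\]
so that $\hm:=\{E_1,\ldots,E_R\}$ is a random matching inside the direct product. For each $T\subseteq[s]$ set $Y_T:=|\{r\in[R]:E_r\in\bigcup_{t\in T}\hf_t\}|$; by symmetry,
\[
\ex[Y_T]=R\cdot\frac{|\bigcup_{t\in T}\hf_t|}{\prod_{j=1}^\ell\binom{n_j}{k_j}}.
\]
Invoking the concentration inequality for random matchings developed by Frankl and Kupavskii \cite{frankl2018erd}, combined with a union bound over the $2^s$ choices of $T$, deduce that with positive probability $Y_T\geq|T|$ for every nonempty $T\subseteq[s]$. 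Hall's marriage theorem, applied to the bipartite graph on the vertex classes $[s]$ and $\hm$ in which $t$ is joined to $E_r$ whenever $E_r\in\hf_t$, then produces distinct $r_1,\ldots,r_s\in[R]$ with $E_{r_t}\in\hf_t$, and $\{E_{r_1},\ldots,E_{r_s}\}$ is the desired rainbow matching.

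\emph{Main obstacle.} The crux is establishing the Hall condition at $T=[s]$: a direct calculation shows only $\ex[Y_{[s]}]\gtrsim s-1$ in the worst case (when the $\hf_t$ are nearly identical), so the estimate is genuinely tight and sharp tail bounds are essential; the factor $8\ell^2$ in $n_i\geq 8\ell^2k_i^2 s$ appears to be tuned precisely to supply the slack required. In the degenerate case where the families coincide, Step 1 alone already provides an ordinary matching of size $s$ that is automatically rainbow, and this dichotomy between the "concentrated" and "degenerate" regimes is likely the dividing line of the argument. Tightness of \eqref{eq-main-2} is witnessed by taking each $\hf_t$ to be the family of all members of $\sqcup_i\binom{V_i}{k_i}$ meeting a fixed $(s-1)$-subset of some $V_i$, which admits no rainbow matching of size $s$.
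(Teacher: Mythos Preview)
Your proposal has a genuine gap at exactly the point you flag as the ``main obstacle,'' and the suggested workaround does not close it. When the $\hf_t$ are identical (or nearly so) and each has size just above $B$, the union $\bigcup_t\hf_t$ has size roughly $B$, and since $B/\prod_j\binom{n_j}{k_j}\approx (s-1)k_1/n_1$ while $R\approx n_1/k_1$, one gets $\ex[Y_{[s]}]\approx s-1$. No concentration inequality can then force $Y_{[s]}\geq s$ with positive probability: the expectation itself is on the wrong side. Step~1 only rescues the \emph{exactly} degenerate case $\hf_1=\cdots=\hf_s$; if the families differ by even a single edge, a matching in $\hf_1$ need not be rainbow, and you have no mechanism to interpolate. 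The union bound over $2^s$ sets is a secondary issue, but note that the hypothesis $n_i\geq 8\ell^2k_i^2s$ is only polynomial in $s$, so the slack is not large.

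The paper takes a quite different route that sidesteps this obstacle. It does \emph{not} attempt to obtain the tight bound directly from a random matching argument. Instead it first proves a deliberately weak bound (Lemma~\ref{lem:main}): if the families are rainbow matching free then some $|\hf_t|<\frac{6sk_1}{n_1}\prod_j\binom{n_j}{k_j}$. This is done by picking two independent random matchings, one in $\binom{V_1}{k_1}$ and one in $\sqcup_{i\geq 2}\binom{V_i}{k_i}$, forming for each $t$ a bipartite graph $G_t$ on these two parts, and invoking the Aharoni--Howard bipartite rainbow-matching lemma (Lemma~\ref{lem:aharoni-howard}) rather than Hall. The point is that Aharoni--Howard only needs $e(G_t)>(s-1)m$ for each $t$ individually, and this is verified by Chebyshev with the variance controlled via the eigenvalues of Kneser graphs (Lemmas~\ref{lem:lovasz1979shannon} and~\ref{lem:alon-chung}). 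The tight bound is then obtained in Lemma~\ref{lem:main-2} by shifting and induction on $s$: one splits on whether removing the first vertex from one part leaves some $(s-1)$-subfamily rainbow matching free (induction) or not (in which case every $\hf_r(v_{i,1})$ is small, and the weak Lemma~\ref{lem:main} is applied on the deleted ground set to control the remaining terms). The factor $8\ell^2$ in the hypothesis is used in the final numerical estimates of this inductive step, not to supply slack in a Hall/union-bound argument.
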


\section{Extremal problem for matchings on direct products}

Frankl and the first author \cite{frankl2021sum} proved an upper bound on the sum of sizes of overlapping families. In this section, by a probabilistic argument we transfer it to an upper bound on the size of  $\hf\subset \sqcup_{i=1}^\ell \binom{V_i}{k_i}$ with matching number at most $s$. By using this upper bound, we prove Theorem \ref{main-1}.

Let $\hf_1, \hf_2,\dots, \hf_m\subset \binom{[n]}{k}$ and $m\geq s+1$. If one cannot choose pairwise disjoint edges from $s+1$ distinct families of $\hf_1, \hf_2,\dots, \hf_m$, we say these families are $s$-overlapping.

 \begin{lem}[\hspace{-0.001em}\cite{frankl2021sum}]\label{lem1}
Let $\hf_1, \hf_2,\dots, \hf_m\subset \binom{[n]}{k}$ be $s$-overlapping families with $n\geq (s+1)k$ and $m\geq s+1$. Then
\[
\sum_{i=1}^m |\hf_i|\le \max\left\{
  s\binom{n}{k},
  ms\binom{n-1}{k-1} \right\}.
\]
\end{lem}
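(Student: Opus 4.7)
My plan is to prove Lemma~\ref{lem1} by a dichotomy on the matching number of $\hg := \bigcup_{i=1}^m \hf_i$, combined with an inductive peeling argument on $s$.

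If $\nu(\hg) \le s$, then by the Erd\H{o}s matching bound in the range $n \ge (s+1)k$ (available through shifting or via the Bollob\'as--Daykin--Erd\H{o}s / Frankl improvements cited in the introduction) one has $|\hg| \le \binom{n}{k} - \binom{n-s}{k} \le s\binom{n-1}{k-1}$. Since $|\hf_i| \le |\hg|$ for every $i$, summing over $i$ yields $\sum_i |\hf_i| \le m s\binom{n-1}{k-1}$, which matches the second term of the bound.

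If instead $\nu(\hg) \ge s+1$, I fix a matching $M_1, \ldots, M_{s+1}$ in $\hg$ and define the ``color classes'' $C_j = \{i \in [m] : M_j \in \hf_i\}$. The $s$-overlapping hypothesis forbids a system of distinct representatives for $(C_1, \ldots, C_{s+1})$, so by Hall's theorem there is a minimal $J \subseteq [s+1]$ with $T := \bigcup_{j \in J} C_j$ satisfying $|T| \le |J| - 1 \le s$. The families indexed by $T$ contribute at most $|T|\binom{n}{k} \le s\binom{n}{k}$, accounting for the first term of the bound. For $i \notin T$, no $\hf_i$ contains any $M_j$ with $j \in J$; I would then argue that the restrictions of these families to the ground set $[n] \setminus \bigcup_{j \in J} M_j$ form an $(s - |T|)$-overlapping system on $n - k|J|$ points, since combining any rainbow $(s-|T|+1)$-matching there with $\{M_j : j \in J\}$ would produce an $(s+1)$-rainbow matching among the original families, contradicting the hypothesis. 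One then concludes by induction on $s$.

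The main obstacle will be making the peeling bookkeep correctly: I must confirm the peeled system is genuinely $(s-|T|)$-overlapping (rather than merely having small matching number), verify that the reduced parameter $n - k|J|$ still satisfies the lower-bound hypothesis $n' \ge (s-|T|+1)k$ needed to apply the inductive claim, and check that the sum of the ``$T$-contribution'' $|T|\binom{n}{k}$ with the inductive bound on the peeled families stays below $\max\{s\binom{n}{k}, ms\binom{n-1}{k-1}\}$ rather than creating a spurious slack term of order $|T|\binom{n-1}{k-1}$. If this arithmetic does not close cleanly, a fallback would be to replace induction on $s$ with simultaneous shifting of all $\hf_i$ (which preserves sizes and the $s$-overlapping property by a standard exchange argument) and then rerun the case split on compressed families, where the matching-structure analysis is sharper.
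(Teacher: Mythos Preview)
The paper does not prove this lemma; it is quoted from \cite{frankl2021sum} with no argument given, so there is nothing to compare against directly. That said, your proposal has a genuine gap that would prevent it from going through as written.

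The fatal step is Case~1. When $\nu(\hg)\le s$ you invoke ``the Erd\H{o}s matching bound in the range $n\ge (s+1)k$'' to conclude $|\hg|\le \binom{n}{k}-\binom{n-s}{k}\le s\binom{n-1}{k-1}$. But this is exactly the Erd\H{o}s matching conjecture, which (as the introduction of the paper itself notes) is \emph{open} in that range: the best general bound is $n>(2s+1)k$, and the Bollob\'as--Daykin--Erd\H{o}s and Frankl improvements you mention do not cover $n\ge (s+1)k$. So your Case~1 assumes a statement at least as strong as the lemma you are trying to prove; the argument is circular. There is no cheap substitute here, since the target bound $\sum_i|\hf_i|\le ms\binom{n-1}{k-1}$ with $m=s+1$ and all $\hf_i$ equal \emph{is} the conjectured EMC bound at $n=(s+1)k$.

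Your Case~2 induction is also not closed. First, restricting the families $\hf_i$ ($i\notin T$) to $[n]\setminus\bigcup_{j\in J}M_j$ drops all their edges meeting $\bigcup_{j\in J}M_j$, so the inductive bound controls only a portion of $\sum_{i\notin T}|\hf_i|$; you never say how to account for the rest. Second, the reduced ground set has size $n-k|J|$, and the induction hypothesis needs $n-k|J|\ge (s-|T|+1)k$; since $|J|\ge |T|+1$ this forces $n\ge (s+2)k$, which fails at the boundary $n=(s+1)k$. Finally, to manufacture an $(s+1)$-rainbow matching from a restricted $(s-|T|+1)$-matching you cannot use all of $\{M_j:j\in J\}$ (there are only $|T|<|J|$ families in $T$); you need a system of distinct representatives of size $|T|$ inside $T$, which requires an extra argument from the minimality of $J$. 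You flagged some of these bookkeeping issues yourself, but they are not cosmetic: the peeling as described does not recover the stated maximum.
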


By Lemma \ref{lem1}, we prove the following lemma via a probabilistic argument.

\begin{lem}\label{lem2}
Let $\hf\subseteq \sqcup_{i=1}^\ell \binom{V_i}{k_i}$ with $|V_i|=n_i$ for $i=1,\dots,\ell$ and $s+1\leq n_1/k_1\leq \dots\leq n_\ell/k_\ell$. If $\nu(\hf)\leq s$, then
\[
|\hf|\le\frac{(s+1)k_1}{n_1} \prod_{i=1}^{\ell}\binom{n_i}{k_i}.
\]
\end{lem}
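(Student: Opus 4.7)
The plan is to apply Lemma~\ref{lem1} to auxiliary families in $\binom{V_1}{k_1}$ obtained by restricting $\hf$ through random choices in the coordinates $V_2,\dots,V_\ell$. Set $m=\lfloor n_1/k_1\rfloor$; the hypothesis $s+1\leq n_1/k_1\leq\cdots\leq n_\ell/k_\ell$ ensures $s+1\leq m\leq n_i/k_i$ for every $i\geq 2$. Thus for each $i\in\{2,\dots,\ell\}$ one may choose, uniformly at random and independently across $i$, an ordered tuple $(A_i^{(1)},\dots,A_i^{(m)})$ of $m$ pairwise disjoint $k_i$-subsets of $V_i$. For $j=1,\dots,m$ set
\[
\hf_j=\left\{F_1\in\binom{V_1}{k_1}\colon F_1\cup A_2^{(j)}\cup\cdots\cup A_\ell^{(j)}\in\hf\right\}.
\]

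The key step is to verify that $\hf_1,\dots,\hf_m\subseteq\binom{V_1}{k_1}$ are (always, pointwise in the random choice) $s$-overlapping. Indeed, any distinct indices $j_1,\dots,j_{s+1}\in[m]$ together with pairwise disjoint $F_1^{(r)}\in\hf_{j_r}$ would produce an $(s+1)$-matching $\{F_1^{(r)}\cup A_2^{(j_r)}\cup\cdots\cup A_\ell^{(j_r)}\}_{r=1}^{s+1}$ in $\hf$: disjointness in $V_1$ holds by assumption, and in each $V_i$ ($i\geq 2$) because the sets $A_i^{(1)},\dots,A_i^{(m)}$ were chosen pairwise disjoint. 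This contradicts $\nu(\hf)\leq s$. Lemma~\ref{lem1} then gives
\[
\sum_{j=1}^m|\hf_j|\leq\max\left\{s\binom{n_1}{k_1},\,ms\binom{n_1-1}{k_1-1}\right\}=s\binom{n_1}{k_1},
\]
where the equality uses $m\leq n_1/k_1$.

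The remainder is a symmetry computation and a short arithmetic check. The marginal law of $A^{(j)}:=A_2^{(j)}\cup\cdots\cup A_\ell^{(j)}$ is uniform on $\sqcup_{i=2}^\ell\binom{V_i}{k_i}$ with independent coordinates, hence $\ex|\hf_j|=|\hf|\big/\prod_{i=2}^\ell\binom{n_i}{k_i}$; averaging the deterministic bound yields $|\hf|\leq(s/m)\prod_{i=1}^\ell\binom{n_i}{k_i}$, and the inequalities $mk_1>n_1-k_1$ (by definition of $m$) and $n_1\geq(s+1)k_1$ combine to give $s/m\leq(s+1)k_1/n_1$, which is the claimed bound. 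The delicate point throughout is the $s$-overlapping verification in the second paragraph: one must force the $A_i^{(j)}$'s to be pairwise disjoint \emph{within each coordinate} $V_i$, because this is exactly what lifts a purported $(s+1)$-matching in the $V_1$-fibers $\hf_j$ to an actual matching in $\hf$; without this property the families could overlap in higher coordinates and Lemma~\ref{lem1} would no longer apply.
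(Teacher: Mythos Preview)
Your proof is correct and follows the same strategy as the paper's: restrict $\hf$ along a random matching in coordinates $2,\dots,\ell$, apply Lemma~\ref{lem1} to the resulting $s$-overlapping families in $\binom{V_1}{k_1}$, and take expectations. The only difference is cosmetic---the paper sets $m=\lfloor n_2/k_2\rfloor$ and bounds via the second branch $(m+1)s\binom{n_1-1}{k_1-1}$ of the maximum in Lemma~\ref{lem1}, whereas you set $m=\lfloor n_1/k_1\rfloor$ and use the first branch $s\binom{n_1}{k_1}$; either choice leads to the stated bound after a short arithmetic check.
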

\proof  Let $m=\lfloor\frac{n_2}{k_2}\rfloor$ and let $B_1,B_2,\dots,B_m$ be an $m$-matching chosen from $\sqcup_{i=2}^\ell \binom{V_i}{k_i}$ uniformly at random. Define
 \[
\hf(B_i) =\left\{A\in \binom{V_1}{k_1} \colon A\cup B_i \in \hf\right\}
\]
for $i=1,\dots,m$. Since $\nu(\hf)\leq s$, $\hf(B_1),\dots,\hf(B_m)$ are $s$-overlapping. Note that $m\geq s+1$. By Lemma \ref{lem1}, we have
\[
\sum_{i=1}^m |\hf(B_i)| \leq \max\left\{s\binom{n_1}{k_1}, ms\binom{n_1-1}{k_1-1}\right\}.
\]
$n_1/k_1\leq n_2/k_2$ implies that $n_1/k_1\leq  \lfloor n_2/k_2\rfloor+1=m+1$. It follows that
\begin{align}\label{eq-1}
\sum_{i=1}^m |\hf(B_i)| \leq (m+1)s\binom{n_1-1}{k_1-1}.
\end{align}
Taking the expectation on both sides of $\eqref{eq-1}$, we obtain that
 \begin{align}\label{eq-2}
\sum_{i=1}^{m} \ex|\hf(B_i)| \leq (m+1)s\binom{n_1-1}{k_1-1}.
\end{align}
Since $B_1,B_2,\dots,B_m$ are chosen from $\sqcup_{i=2}^\ell \binom{V_i}{k_i}$ uniformly, we have
 \begin{align}\label{eq-3}
\ex|\hf(B_i)| = \sum_{B\in \bigsqcup_{i=2}^\ell \binom{V_i}{k_i}}  |\hf(B)|\cdot \frac{1}{\prod_{i=2}^\ell \binom{n_i}{k_i}}= \frac{|\hf|}{\prod_{i=2}^\ell\binom{n_i}{k_i}}.
 \end{align}
By \eqref{eq-2} and \eqref{eq-3},  we have
\[
|\hf| \leq \left(1+\frac{1}{m}\right)s \binom{n_1-1}{k_1-1} \prod_{i=2}^\ell\binom{n_i}{k_i}.
\]
Since $m=\lfloor\frac{n_2}{k_2}\rfloor >s$, it follows that
\[
\left(1+\frac{1}{m}\right)s < s+1
\]
and the lemma follows.
\qed

The following operation, called shifting, was invented by Erd\H{o}s, Ko and Rado \cite{ERD1961INTERSECTION} and further developed by Frankl \cite{frankl1987shifting}. Let $1\leq i< j\leq n$, $\hf\subset{[n]\choose k}$. Define
$$S_{ij}(\hf)=\{S_{ij}(F)\colon F\in\hf\},$$
where
$$S_{ij}(F)=\left\{
                \begin{array}{ll}
                  (F\setminus\{j\})\cup\{i\}, & j\in F, i\notin F \text{ and } (F\setminus\{j\})\cup\{i\}\notin \hf; \\[5pt]
                  F, & \hbox{otherwise.}
                \end{array}
              \right.
$$
It is well known (cf. \cite{frankl1987shifting}) that shifting does not increase the matching number.

Let us recall some properties of the shifting operator. Let $\hf\subseteq \sqcup_{i=1}^\ell \binom{V_i}{k_i}$ with $V_i=\{v_{i,1},v_{i,2},\dots,v_{i,n_i}\}$
for $i=1,\dots,\ell$. Define a partial order $\prec$ on $\cup_{i=1}^\ell V_i$ such that
\[
v_{i,1}\prec v_{i,2}\prec\dots\prec v_{i,n_i}
\]
for each $i$ and vertices from different parts  are incomparable. A family $\hf\subseteq \sqcup_{i=1}^\ell \binom{V_i}{k_i}$ is called shifted if $S_{ab}(\hf)=\hf$ for all $a,b\in \cup_{i=1}^\ell V_i$ with $a\prec b$. Frankl\cite{frankl1987shifting} proved that repeated application of shifting to any family  eventually produces a shifted family.

Let $k=k_1+k_2+\dots+k_\ell$. For two different edges  $A=\{a_1,a_2,\dots,a_k\}$ and $B=\{b_1,b_2,\dots,b_k\}$ in $\sqcup_{i=1}^\ell \binom{V_i}{k_i}$, define $A \prec B$ iff there exists a permutation $\sigma_1\sigma_2\dots\sigma_k$ of $[k]$ such that $a_j\prec b_{\sigma_j}$ or $a_j=b_{\sigma_j}$  for  $j=1,\dots,k$. It is shown in \cite{frankl1987shifting} that if $\hf$ is a shifted family, $A\prec B$ and $B\in \hf$ then $A\in \hf$ as well.

In the proof of the following lemma, we need the following inequality, which were already used in \cite{bollob1976sets}.  For $n\geq (2s+1)k$,
\begin{align}\label{ineq-6}
\binom{n-s}{k}/\binom{n}{k} \geq \left(1-\frac{s}{n-k}\right)^k\geq 1-\frac{ks}{n-k} \geq \frac{1}{2}.
\end{align}

\begin{lem}\label{lem3}
  Let $\ell,s,n_1,\dots,n_\ell,k_1,\dots,k_\ell$ be integers with  $n_i\geq 4\ell^2k_i^2s$ for $i=1,\dots,\ell$. If $\hf\subseteq \sqcup_{i=1}^\ell \binom{V_i}{k_i}$ has matching number at most $s$, then
  \[ |\hf|\le \prod_{i=1}^\ell\binom{n_i}{k_i}- \min_{x_1+\dots+x_\ell=s}\prod_{i=1}^\ell\binom{n_i-x_i}{k_i}
  \]
\end{lem}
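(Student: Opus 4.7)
The plan is to shift $\hf$ and then argue by induction on $s$. First, apply the shifting operator $S_{ab}$ for every comparable pair $a \prec b$ in $\bigcup_{i=1}^\ell V_i$ until the family is shifted; since shifting preserves $|\hf|$ and does not increase $\nu(\hf)$, I may assume $\hf$ is shifted. After permuting the coordinates, assume also $n_1/k_1 \le n_2/k_2 \le \cdots \le n_\ell/k_\ell$.

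For the induction on $s$, the base case $s = 0$ forces $\hf = \emptyset$ and both sides equal $0$. For $s \ge 1$, split $\hf = \hg_0 \sqcup \hg_1$ with $\hg_1 = \{F \in \hf : v_{1,1} \in F\}$ and $\hg_0 = \{F \in \hf : v_{1,1} \notin F\}$, so that $|\hg_1| \le \binom{n_1 - 1}{k_1 - 1}\prod_{j \ne 1}\binom{n_j}{k_j}$ trivially, while $\hg_0$ lives on the direct product with $V_1$ replaced by $V_1 \setminus \{v_{1,1}\}$. The key claim will be $\nu(\hg_0) \le s - 1$: if $\hg_0$ contains $s$ pairwise disjoint edges then, by shiftedness of $\hf$ and the ample room guaranteed by $n_i \ge 4\ell^2 k_i^2 s$ in each part, one can construct an additional edge $F_{s+1} \in \hf$ containing $v_{1,1}$ and disjoint from that matching, giving a matching of size $s + 1$ in $\hf$ and contradicting $\nu(\hf) \le s$. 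Granting this claim, apply the inductive hypothesis to $\hg_0$ (valid because $n_1 - 1 \ge 4\ell^2 k_1^2 (s - 1)$ follows from $n_1 \ge 4\ell^2 k_1^2 s$) to obtain a tuple $(x_1', \ldots, x_\ell')$ with $\sum_i x_i' = s - 1$ satisfying
\[
|\hg_0| \le \binom{n_1 - 1}{k_1}\prod_{j \ne 1}\binom{n_j}{k_j} - \binom{n_1 - 1 - x_1'}{k_1}\prod_{j \ne 1}\binom{n_j - x_j'}{k_j}.
\]
Adding the bound on $|\hg_1|$ and using Pascal's identity $\binom{n_1}{k_1} = \binom{n_1 - 1}{k_1} + \binom{n_1 - 1}{k_1 - 1}$ gives
\[
|\hf| \le \prod_{i=1}^\ell \binom{n_i}{k_i} - \binom{n_1 - (x_1' + 1)}{k_1}\prod_{j \ne 1}\binom{n_j - x_j'}{k_j},
\]
and since $(x_1' + 1, x_2', \ldots, x_\ell')$ sums to $s$, the subtracted product is at least $\min_{\sum y_i = s}\prod_i \binom{n_i - y_i}{k_i}$, completing the inductive step.

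The hard part will be the matching-number claim $\nu(\hg_0) \le s - 1$. It need not hold for every shifted $\hf$ with $\nu(\hf) = s$---for example, in the classical $\ell = 1$ setting the downward closure of the canonical matching $\{[(i-1)k + 2,\, ik + 1] : i \in [s]\}$ has a maximum matching entirely avoiding $v_{1,1}$. In such exceptional families, however, $|\hf|$ will be much smaller than the lemma's right-hand side, and a direct bound (for instance, Lemma~\ref{lem2} used as a fallback, or an explicit count of the downward closure of a size-$s$ matching) delivers the inequality without invoking the reduction. Formalizing this dichotomy---a ``matching-reducible'' case versus a ``small-family'' case---and verifying that the hypothesis $n_i \ge 4\ell^2 k_i^2 s$ forces one of the two branches to apply, together with executing the shifting/exchange construction of the extra edge $F_{s+1}$ through $v_{1,1}$ across all $\ell$ parts simultaneously, is the principal technical challenge of the argument.
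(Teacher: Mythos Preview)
Your overall plan---shift, induct on $s$, and split into a ``reducible'' case (where deleting the first vertex in some part drops the matching number) versus an ``irreducible'' case---coincides with the paper's structure. The reducible branch is handled exactly as in the paper, and your Pascal-identity bookkeeping is correct.

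The gap is entirely in the irreducible branch, and it is not a technicality you can postpone: it is where the hypothesis $n_i\ge 4\ell^2k_i^2s$ does all of its work. Neither of your proposed fallbacks suffices. Applying Lemma~\ref{lem2} directly to $\hf$ yields
\[
|\hf|\le \frac{(s+1)k_1}{n_1}\prod_{i}\binom{n_i}{k_i}=(s+1)\binom{n_1-1}{k_1-1}\prod_{j\ge 2}\binom{n_j}{k_j},
\]
whereas the target (taking $x_1=s$, $x_j=0$ otherwise) is $\bigl[\binom{n_1}{k_1}-\binom{n_1-s}{k_1}\bigr]\prod_{j\ge2}\binom{n_j}{k_j}\le s\binom{n_1-1}{k_1-1}\prod_{j\ge2}\binom{n_j}{k_j}$; the Lemma~\ref{lem2} bound overshoots by the factor $(s+1)/s$ and hence does \emph{not} imply the lemma. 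Your alternative, ``an explicit count of the downward closure of a size-$s$ matching,'' is also off target: being in the irreducible case only says $\nu(\hf(\overline{v_{1,1}}))=s$, which in no way forces $\hf$ to be close to the shift-closure of a single matching.

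What the paper actually does in this case is substantially more delicate. It needs the irreducibility hypothesis for \emph{every} part $i$ (not just $i=1$), and it uses it twice: first, the $s$-matching in $\hf(\overline{v_{i,1}})$ gives a cover of $\hf(v_{i,1})$, yielding $|\hf(v_{i,1})|\le \frac{k_i}{n_i}\bigl(\sum_p sk_p^2/n_p\bigr)\prod_j\binom{n_j}{k_j}$. Second, one takes the shifted $s$-matching $M'$ in $\hf$ with vertex set $U'=\bigcup_i\{v_{i,1},\dots,v_{i,sk_i}\}$, writes $|\hf|\le\sum_{i}\sum_{j=1}^{sk_i}|\hf[v_{i,j}]|$, and uses shiftedness to sandwich: the first $s$ terms in each inner sum are bounded by $|\hf(v_{i,1})|$, and the remaining $s(k_i-1)$ terms by $|\hf[v_{i,s+1}]|$, to which Lemma~\ref{lem2} is applied (this subfamily, not $\hf$ itself, has matching number $\le s$ by a shift argument). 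Only after summing and invoking $n_i\ge 4\ell^2k_i^2s$ does one arrive at $|\hf|\le\frac{s}{2}\binom{n_1-1}{k_1-1}\prod_{j\ge2}\binom{n_j}{k_j}$, which beats the target. None of this machinery is visible in your sketch; in particular, the idea of applying Lemma~\ref{lem2} to the link families $\hf[v_{i,s+1}]$ rather than to $\hf$ is the missing ingredient.
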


\proof  Without loss of generality, we may assume that $n_1/k_1\leq \dots \leq n_\ell/k_\ell$ and $\hf$ is shifted. We prove the lemma by induction on $s$. The case $s=1$ is verified in Theorem \ref{EKR-directprod}. Assume that the lemma holds for the case $s-1$. Let $V_i=\{v_{i,j}\colon j\in [n_i]\}$. For each $x\in \cup_{i=1}^\ell V_i$, define
\[
\hf(x)=\{F\setminus \{x\} \colon x\in F\in \hf\}\mbox{ and } \hf(\overline{x})=\{F\colon x\notin F\in \hf\}.
\]
We distinguish two cases.

\textbf{Case 1.}  $\nu(\hf(\overline{v_{i,1}}))\le s-1$ for some $i\in [\ell]$.

By the induction hypothesis, we have
\[ |\hf(\overline{v_{i,1}})|\le \binom{n_i-1}{k_i}\prod_{j\neq i}\binom{n_j}{k_j}- \min_{y_1+\dots+y_\ell=s-1} \binom{n_i-1-y_i}{k_i}\prod_{j\neq i}\binom{n_j-y_j}{k_j}.\]
Then,
\begin{align*}
  |\hf|
  &=|\hf(\overline{v_{i,1}})|+|\hf({v_{i,1}})|\\[5pt]
  &\le \binom{n_i-1}{k_i}\prod_{j\neq i}\binom{n_j}{k_j}- \min_{y_1+\dots+y_\ell=s-1} \binom{n_i-1-y_i}{k_i}\prod_{j\neq i}\binom{n_j-y_j}{k_j}+\binom{n_i-1}{k_i-1}\prod_{j\neq i}\binom{n_j}{k_j}\\[5pt]
  &=\prod_{j=1}^\ell\binom{n_j}{k_j}- \min_{y_1+\dots+y_\ell=s-1} \binom{n_i-1-y_i}{k_i}\prod_{j\neq i}\binom{n_j-y_j}{k_j}\\[5pt]
  &\le\prod_{j=1}^\ell\binom{n_j}{k_j}-\min_{x_1+x_2+\dots+x_\ell=s}\prod_{j=1}^\ell\binom{n_j-x_j}{k_j}
\end{align*}
and the lemma holds.

\textbf{Case 2.}  $\nu(\hf(\overline{v_{i,1}}))=s$ for all $i\in [\ell]$.

Let $M$ be an $s$-matching in $\hf(\overline{v_{i,1}})$ and let $U$ be the set of vertices covered by $M$. Since $\nu(\hf)\leq s$, each edge in $\hf(v_{i,1})$ intersects $U$. It follows that
\begin{align*}
|\hf({v_{i,1}})| \le \prod_{j=1}^\ell\binom{n_j'}{k_j'}-\prod_{j=1}^\ell\binom{n_j'-sk_j}{k_j'},
\end{align*}
where $n_j'=n_j, k_j'=k_j$ for all $j\neq i$ and $n_i'=n_i-1, k_i'=k_i-1$. Then, we have
\begin{align*}
|\hf({v_{i,1}})| &\le \sum_{p=1}^\ell\left[\binom{n_p'}{k_p'}-\binom{n_p'-sk_p}{k_p'}\right]\prod_{j\leq p-1}\binom{n_j'-sk_j}{k_j'}\prod_{j\geq p+1}\binom{n_j'}{k_j'}\\[5pt]
\leq & \sum_{p=1}^\ell sk_p\binom{n_p'-1}{k_p'-1}\prod_{j\leq p-1}\binom{n_j'-sk_j}{k_j'}\prod_{j\geq p+1}\binom{n_j'}{k_j'}\\[5pt]
\leq & \sum_{p=1}^\ell sk_p\binom{n_p'-1}{k_p'-1}\prod_{j\leq p-1}\binom{n_j'}{k_j'}\prod_{j\geq p+1}\binom{n_j'}{k_j'}\\[5pt]
\leq & \sum_{p=1}^\ell  \frac{sk_pk_p'}{n_p'}\binom{n_p'}{k_p'}\prod_{j\leq p-1}\binom{n_j'}{k_j'}\prod_{j\geq p+1}\binom{n_j'}{k_j'}\\[5pt]
=&\left(\sum_{p=1}^\ell  \frac{sk_pk_p'}{n_p'}\right) \prod_{j=1}^\ell\binom{n_j'}{k_j'}.
\end{align*}
Note that $\frac{k_i'}{n_i'} = \frac{k_i-1}{n_i-1} \leq \frac{k_i}{n_i}$ and $\binom{n_i'}{k_i'} =\frac{k_i}{n_i}\binom{n_i}{k_i}$. It follows that
\begin{align}\label{ineq-1}
|\hf({v_{i,1}})| \le \frac{k_i}{n_i}\left(\sum_{p=1}^\ell  \frac{sk_p^2}{n_p}\right) \prod_{j=1}^\ell \binom{n_j}{k_j}.
\end{align}

Let $M'$ be an $s$-matching in $\hf$ and let $U'$ be the set of vertices covered by $M'$. Since $\hf$ is shifted, we may assume that
\[
U'=\bigcup_{i=1}^\ell\{v_{i,1},\dots,v_{i,sk_i}\}.
\]
For each $i\in [\ell]$ and  $j\in [sk_i]$, define
\[
\hf[v_{i,j}]= \left\{T\in \bigsqcup_{p=1}^\ell \binom{V_p'}{k_p'} \colon T\cup\{v_{i,j}\} \in \hf\right\},
\]
where $V_p'=V_p,\ k_p'=k_p$ for $p\neq i$ and $V_i'=V_i \setminus\{v_{i,1},\dots,v_{i,j}\},\  k_i'=k_i-1$. By shiftedness, we have
\begin{align}\label{ineq-2}
\hf[v_{i,1}]\supseteq \hf[v_{i,2}] \supseteq \dots\supseteq \hf[v_{i,sk_i}]
\end{align}
for $i=1,2,\dots,\ell$.

We claim that $\nu(\hf[v_{i,s+1}])\leq s$ for $i=1,2,\dots,\ell$. Otherwise, let $E_1,E_2,\dots,E_{s+1}$ be a matching of size $s+1$ in  $\hf[v_{i,s+1}]$. By \eqref{ineq-2},  $E_1\cup\{v_{i,1}\},E_2\cup\{v_{i,2}\},\dots,E_{s+1}\cup\{v_{i,s+1}\}$ is a matching of size $s+1$ in $\hf$, a contradiction. Thus $\nu(\hf[v_{i,s+1}])\leq s$ for $i=1,2,\dots,\ell$. Note that $n_i\geq (s+1)k_i$ implies that $\frac{n_i-s-1}{k_i-1} \geq \frac{n_i}{k_i}\geq \frac{n_1}{k_1}$. By Lemma \ref{lem2}, we get for each $i$
\begin{align}\label{ineq-3}
|\hf[v_{i,s+1}]| \leq \frac{(s+1)k_1}{n_1} \binom{n_i-s-1}{k_i-1}\prod_{j\neq i}\binom{n_j}{k_j} \leq \frac{(s+1)k_1k_i}{n_1n_i} \prod_{j=1}^\ell\binom{n_j}{k_j}.
\end{align}
Since $M'$ is a largest matching in $\hf$, $U'$ is a vertex cover of $\hf$. By \eqref{ineq-2}, it follows that
\begin{align}\label{eq-4}
|\hf| \leq \sum_{i=1}^\ell \sum_{j=1}^{sk_i} |\hf[v_{i,j}]|\leq \sum_{i=1}^\ell\left(s|\hf[v_{i,1}]|+(sk_i-s)|\hf[v_{i,s+1}]|\right).
\end{align}
Substitute \eqref{ineq-1} and \eqref{ineq-3} into \eqref{eq-4}, we arrive at
\begin{align*}
|\hf| &\leq \sum_{i=1}^\ell \left(s\frac{k_i}{n_i}\left(\sum_{p=1}^\ell \frac{sk_p^2}{n_p}\right) \prod_{j=1}^\ell \binom{n_j}{k_j}+s(k_i-1)\frac{(s+1)k_1k_i}{n_1n_i} \prod_{j=1}^\ell\binom{n_j}{k_j}\right)\\[5pt]
&\leq  s\prod_{j=1}^\ell\binom{n_j}{k_j}  \sum_{i=1}^\ell\left(\frac{k_i}{n_i}\cdot \sum_{p=1}^\ell \frac{sk_p^2}{n_p} +\frac{2sk_1k_i^2}{n_1n_i} \right).
\end{align*}
Since $n_i\geq 4\ell^2k_i^2s$, we have
\[
\sum_{p=1}^\ell \frac{sk_p^2}{n_p} \leq \frac{1}{4\ell},\mbox{ and } \frac{2sk_i^2}{n_i}\leq \frac{1}{2\ell^2}\leq \frac{1}{4\ell}.
\]
Moreover, $\frac{k_i}{n_i}\leq \frac{k_1}{n_1}$. Hence, by inequality \eqref{ineq-6} we obtain that
\[
|\hf| \leq \frac{s}{2}\binom{n_1-1}{k_1-1}\prod_{j=2}^\ell\binom{n_j}{k_j}\leq  s\binom{n_1-s}{k_1-1}\prod_{j=2}^\ell\binom{n_j}{k_j}\leq \left(\binom{n_1}{k_1}-\binom{n_1-s}{k_1}\right)\prod_{j=2}^\ell\binom{n_j}{k_j}
\]
and this completes the proof.
\qed

\begin{proof}[Proof of Theorem \ref{main-1}.] By Lemma \ref{lem3}, we are left to show that
\[
 \min_{x_1+\dots+x_\ell=s}\prod_{i=1}^\ell\binom{n_i-x_i}{k_i} =\min_{1\leq i\leq \ell}\binom{n_i-s}{k_i}\prod_{j\neq i}\binom{n_j}{k_j}.
\]
Set
\[
f(x) =\binom{n_i-x}{k_i}\binom{n_j-c+x}{k_j}.
\]
Computing its derivative, we have
\[
f'(x) = \binom{n_i-x}{k_i}\binom{n_j-c+x}{k_j} \left[\sum_{q=0}^{k_j-1} \frac{1}{n_j-c+x-q}-\sum_{p=0}^{k_i-1} \frac{1}{n_i-x-p}\right].
\]
Set
\[
g(x) = \sum_{q=0}^{k_j-1} \frac{1}{n_j-c+x-q}-\sum_{p=0}^{k_i-1} \frac{1}{n_i-x-p}.
\]
It is easy to see that $g(x)$ is a decreasing function in the range $[k_j-n_j+c, n_i-k_i]$. Moreover,
\[
g(k_j-n_j+c)=\sum_{q=0}^{k_j-1} \frac{1}{k_j-q}-\sum_{p=0}^{k_i-1} \frac{1}{n_i-p+n_j-k_j-c}> 1- \frac{k_i}{n_i-k_i}>0
\]
and
\[
g(n_i-k_i) = \sum_{q=0}^{k_j-1} \frac{1}{n_j-c+n_i-k_i-q}-\sum_{p=0}^{k_i-1} \frac{1}{k_i-p} <\frac{k_j}{n_j-k_j}-1 <0.
\]
Let $x_0$ be the unique zero of $g(x)$ in the range $[k_j-n_j+c, n_i-k_i]$. It follows that $f(x)$ is increasing in the range $[k_j-n_j,x_0]$ and decreasing in the range $[x_0,n_i-k_i]$. Then we have
\[
\min_{0\leq x\leq c}f(x) =\min\left\{\binom{n_i-c}{k_i}\binom{n_j}{k_j}, \binom{n_i}{k_i}\binom{n_j-c}{k_j}\right\}.
\]
Thus,
\[
 \min_{x_1+\dots+x_\ell=s}\prod_{i=1}^\ell\binom{n_i-x_i}{k_i} =\min_{1\leq i\leq \ell} \binom{n_i-s}{k_i}\prod_{j\neq i}\binom{n_j}{k_j}
\]
and the theorem follows.
\end{proof}

\section{Extremal problem for rainbow matchings on direct products}

Frankl and Kupavskii \cite{frankl2018erd} proved a concentration inequality for the intersections of a $k$-uniform hypergraph and a random matching. In this section, by the concentration property of the bipartite graph defined on two random matchings, we establish a condition on the number of edges for a family of direct products to contain a rainbow matching.

Aharoni and Howard \cite{aharoni2017rainbow} provided a tight condition on the number of edges for a family of bipartite graphs to contain a rainbow matching.

\begin{lem}[\hspace{-0.001em}\cite{aharoni2017rainbow}]\label{lem:aharoni-howard}
Let $G_1,G_2,\dots,G_s$ be bipartite graphs with the same partite sets $L$ and $R$ such that $|L|=|R|=m\geq s$. If $e(G_i)>(s-1)m$ for every $i\in [s]$, then $G_1,G_2,\dots,G_s$ contain a rainbow matching.
\end{lem}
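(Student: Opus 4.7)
The plan is to prove Lemma \ref{lem:aharoni-howard} by induction on $s$. For $s=1$, the assumption $e(G_1)>0$ directly provides an edge, which is itself a rainbow matching of size one. So assume the statement for $s-1$ and suppose for contradiction that $G_1,\dots,G_s$ satisfy $e(G_i)>(s-1)m$ for all $i$ but admit no rainbow matching of size $s$.

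Let $M=\{e_1,\dots,e_j\}$ be a maximum rainbow matching, with $j<s$ and (after relabeling colors) $e_t\in G_t$ for $t=1,\dots,j$. By the maximality of $M$, every edge of $G_i$ with $i>j$ must be incident to $V(M)$, the $2j$-vertex endpoint set of $M$. A straightforward degree count then gives
\[
(s-1)m < e(G_i) \le \sum_{v\in V(M)}\deg_{G_i}(v) \le 2jm,
\]
so $j>(s-1)/2$, which is a nontrivial lower bound on the maximum rainbow matching but still falls short of $j=s$.

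To close the gap, I would use an augmenting/swap argument. Given an edge $f\in G_s$ meeting $V(M)$ only at an endpoint of $e_t$, replace $e_t$ by $f$ to obtain a new rainbow matching of size $j$ that uses color $s$ but leaves color $t$ free. If $G_t$ has an edge disjoint from the updated vertex set, we augment, contradicting maximality. Otherwise every edge of $G_t$ meets the new vertex set, and we iterate the swap from another color. Each step either augments or extends an alternating sequence of swaps, and the crux is to show that this alternating process must terminate in an augmentation. A clean way to formalize this is to apply Hall's theorem to an auxiliary bipartite graph whose two sides are the currently deficient colors and the vertices of the current matching, or to invoke a deficiency form of K\"onig's theorem.

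The main obstacle is ensuring that the edge budget $e(G_i)>(s-1)m$ genuinely drives the alternating procedure to completion, ruling out the possibility that every swap merely permutes the deficient color without producing a true augmentation. Such a pathological scenario would force too many edges of the $G_i$ into a tight structure incompatible with the counting inequality displayed above, so making this precise is the key technical step. Once that is handled, the inductive step closes and the lemma follows.
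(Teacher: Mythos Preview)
The paper does not contain a proof of this lemma; it is quoted from Aharoni--Howard \cite{aharoni2017rainbow} and used as a black box in the proof of Lemma~\ref{lem:main}. There is therefore no in-paper argument to compare your attempt against.

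Your proposal, as you yourself acknowledge, is not a complete proof. The induction frame and the crude bound $j>(s-1)/2$ are correct, but the augmenting/swap step is only a heuristic outline. The difficulty you flag is genuine: swapping $e_t$ out for an edge $f\in G_s$ changes the vertex set of the current matching, so the constraint ``every edge of $G_t$ meets the new vertex set'' refers to a moving target, and nothing in your sketch prevents the swap process from cycling indefinitely among rainbow matchings of size $j$ without ever augmenting. Asserting that a bad cycle ``would force too many edges into a tight structure incompatible with the counting inequality'' is precisely the content of the lemma, not a step toward proving it. One concrete observation that at least orients a real proof is that, by K\"onig's theorem, each $G_i$ already has matching number at least $s$, since any vertex cover $C$ satisfies $(s-1)m<e(G_i)\le |C|\,m$; the argument in \cite{aharoni2017rainbow} builds on structural facts of this kind rather than on an ad hoc alternating procedure. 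As written, your sketch leaves the main difficulty unresolved.
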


We recall that the Kneser graph $KG(n,k)$ is the graph on the vertex set $\binom{[n]}{k}$
and with the edge set formed by pairs of disjoint sets. Lovasz \cite{lovasz1979shannon} determined all the eigenvalues of the adjacent matrix of $KG(n,k)$.

\begin{lem}[\hspace{-0.001em}\cite{lovasz1979shannon}]\label{lem:lovasz1979shannon}
The eigenvalues of the adjacent matrix of $KG(n,k)$ are $(-1)^i\binom{n-k-1}{k-i}$ with multiplicity $\binom{n}{i}-\binom{n}{i-1}$, $i=0,1,\dots,k$. {\textup(}Here $\binom{n}{-1}=0$.{\textup)}
\end{lem}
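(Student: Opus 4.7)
The plan is to exploit the $S_n$-symmetry of $KG(n,k)$ and work inside the canonical filtration of $\mathbb{R}^{\binom{[n]}{k}}$ associated with the Johnson scheme. For each $0\le i\le k$ and $S\in\binom{[n]}{i}$, let $f_S\in\mathbb{R}^{\binom{[n]}{k}}$ be the characteristic vector of $\{T\in\binom{[n]}{k}:S\subseteq T\}$, and put $U_i=\mathrm{span}\{f_S:S\in\binom{[n]}{i}\}$. A standard argument yields a strict flag $U_0\subsetneq U_1\subsetneq\dots\subsetneq U_k=\mathbb{R}^{\binom{[n]}{k}}$, and the orthogonal complements $W_i=U_i\cap U_{i-1}^{\perp}$ (with $U_{-1}=0$) form an orthogonal decomposition with $\dim W_i=\binom{n}{i}-\binom{n}{i-1}$, matching the multiplicities in the lemma.

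Let $A$ denote the adjacency matrix of $KG(n,k)$. A direct count of $k$-sets disjoint from $T$ that contain $S$ (possible only when $S\cap T=\emptyset$) gives
\[
 Af_S=\binom{n-k-i}{k-i}\,h_S,\qquad h_S(T)=\mathbf{1}[S\cap T=\emptyset].
\]
Inclusion-exclusion on the constraints $s\notin T$ for $s\in S$ yields
\[
 h_S=\sum_{j=0}^{i}(-1)^{j}\sum_{S'\in\binom{S}{j}}f_{S'},
\]
which shows that $h_S\in U_i$, so $A$ preserves the filtration. Moreover, only the $j=i$ summand (equal to $(-1)^{i}f_S$) fails to lie in $U_{i-1}$, producing the key congruence
\[
 Af_S\equiv(-1)^{i}\binom{n-k-i}{k-i}\,f_S\pmod{U_{i-1}}.
\]

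To conclude I would argue that this congruence forces $A$ to act as the scalar $\lambda_i=(-1)^{i}\binom{n-k-i}{k-i}$ on $W_i$. Since the congruence holds for every $S\in\binom{[n]}{i}$ and the $f_S$ span $U_i$, the operator $A-\lambda_iI$ maps $U_i$ into $U_{i-1}$. Self-adjointness of $A$ together with $A(U_{i-1})\subseteq U_{i-1}$ implies $A(U_{i-1}^{\perp})\subseteq U_{i-1}^{\perp}$, hence $A(W_i)\subseteq W_i$, and therefore $(A-\lambda_iI)(W_i)\subseteq W_i\cap U_{i-1}=\{0\}$. Summing the multiplicities $\binom{n}{i}-\binom{n}{i-1}$ over $0\le i\le k$ then accounts for all $\binom{n}{k}$ eigenvalues with multiplicity.

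The main obstacle is the inclusion-exclusion identity for $h_S$, which encodes the cancellation across the filtration that is ultimately responsible for the sign $(-1)^{i}$; once this identity and the (routine) strictness of the flag $U_0\subsetneq\dots\subsetneq U_k$ are in place, the rest is linear-algebraic bookkeeping. An alternative route invokes the Specht-module decomposition of the $S_n$-permutation representation on $\binom{[n]}{k}$, in which each $W_i$ realizes the irreducible labeled by the partition $(n-i,i)$, and then Schur's lemma delivers the scalar action of $A$ on $W_i$ automatically.
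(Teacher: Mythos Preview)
The paper does not prove this lemma; it is quoted from Lov\'asz and used as a black box, so there is no proof in the paper to compare against.

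Your argument is the standard one and is correct. The filtration $U_0\subsetneq\cdots\subsetneq U_k$ by spans of containment indicators, the computation $Af_S=\binom{n-k-i}{k-i}h_S$, the inclusion--exclusion expansion of $h_S$, and the self-adjointness step that forces $A$ to act as a scalar on each $W_i$ are all sound. Two minor remarks. First, strictness of the flag and the dimension formula $\dim U_i=\binom{n}{i}$ rest on the linear independence of the $f_S$ for $|S|=i$, which requires $n\ge 2k$; this is harmless since $KG(n,k)$ is edgeless otherwise, but it is worth stating. Second, your computation produces $\lambda_i=(-1)^{i}\binom{n-k-i}{k-i}$, which is the correct value: the upper index $n-k-1$ printed in the lemma is a typo for $n-k-i$, as one sees already at $i=0$, where the eigenvalue must equal the degree $\binom{n-k}{k}$, and the paper itself uses the correct value $\binom{n-k-1}{k-1}$ for the second-largest absolute eigenvalue when applying the lemma.
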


We also need the following result due to Alon and Chung \cite{alon1988explicit}.

\begin{lem}[\hspace{-0.001em}\cite{alon1988explicit}]\label{lem:alon-chung}
  Let $G$ be a $d$-regular graph on $n$ vertices and $A$ be its  adjacent matrix. Let  $d=\lambda_1\geq\lambda_2\geq\dots\geq \lambda_n\geq-d$ be the eigenvalue of $A$ and let $\lambda=\max\{|\lambda_2|,|\lambda_n|\}$. For $S\subseteq V(G)$ with $|S|=\alpha n$, we have
\[ \left| e(G[S])-\frac dn\cdot\frac{(\alpha n)^2}{2}   \right|\leq \frac 12 \lambda\alpha(1-\alpha)n.
\]
\end{lem}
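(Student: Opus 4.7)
The plan is to use the standard spectral / eigenvector expansion argument: the number of edges spanned by $S$ is captured by the quadratic form $\chi_S^\top A \chi_S$, and the eigenvalue gap controls how far this quadratic form can be from its ``expected'' value $d|S|^2/n$.

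Concretely, I would first fix an orthonormal basis $v_1,v_2,\dots,v_n$ of eigenvectors of $A$ with $Av_i=\lambda_i v_i$. Since $G$ is $d$-regular, the all-ones vector is an eigenvector for $\lambda_1=d$, so I may take $v_1=\tfrac{1}{\sqrt n}(1,1,\dots,1)^\top$. Let $\chi_S\in\{0,1\}^n$ be the characteristic vector of $S$; then $2e(G[S])=\chi_S^\top A\chi_S$ and $\|\chi_S\|^2=|S|=\alpha n$. Expanding $\chi_S=\sum_{i=1}^n c_i v_i$, one computes
\[
c_1 \;=\;\langle \chi_S,v_1\rangle \;=\;\frac{|S|}{\sqrt n}\;=\;\alpha\sqrt n,
\qquad
\sum_{i=1}^n c_i^2 \;=\;\|\chi_S\|^2 \;=\;\alpha n,
\]
so $\sum_{i\geq 2}c_i^2=\alpha n-\alpha^2 n=\alpha(1-\alpha)n$.

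Next I would evaluate the quadratic form:
\[
2e(G[S]) \;=\; \chi_S^\top A\chi_S \;=\; \sum_{i=1}^n \lambda_i c_i^2 \;=\; d\,\alpha^2 n \;+\; \sum_{i=2}^n \lambda_i c_i^2.
\]
Because $|\lambda_i|\leq\lambda$ for all $i\geq 2$, the error term satisfies
\[
\Bigl|\sum_{i=2}^n \lambda_i c_i^2\Bigr| \;\leq\; \lambda \sum_{i=2}^n c_i^2 \;=\; \lambda\,\alpha(1-\alpha)n.
\]
Rearranging and dividing by two yields
\[
\Bigl|\,e(G[S])-\tfrac{d}{n}\cdot\tfrac{(\alpha n)^2}{2}\,\Bigr| \;\leq\; \tfrac{1}{2}\lambda\,\alpha(1-\alpha)n,
\]
which is exactly the claimed inequality.

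The only potentially subtle point is verifying that $v_1$ can indeed be taken proportional to the all-ones vector; this is immediate from $d$-regularity (rows of $A$ sum to $d$), so there is no real obstacle. Everything else is a routine spectral decomposition, and no hypothesis beyond $d$-regularity, the definition of $\lambda$, and $|S|=\alpha n$ is used.
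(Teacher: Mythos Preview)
Your proof is correct and is exactly the standard spectral-decomposition argument used to establish this inequality. Note, however, that the paper does not supply its own proof of this lemma: it is quoted as a known result from Alon and Chung \cite{alon1988explicit} and used as a black box. So there is nothing to compare against in the paper itself; your argument simply fills in the (omitted) classical proof.
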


By considering the bipartite graph defined on two random matchings, we prove the following lemma, which is key to the proof of Theorem \ref{main-2}.

\begin{lem}\label{lem:main}
Let $\ell,s,n_1,\dots,n_\ell,k_1,\dots,k_\ell$ be integers with $3s\leq n_1/k_1\leq \dots \leq n_\ell/k_\ell$. Let  $\hf_1,\hf_2,\dots,\hf_s\subseteq \sqcup_{i=1}^\ell \binom{V_i}{k_i}$ with $|V_i|=n_i$ for $i=1,\dots,\ell$. If $\hf_1,\hf_2,\dots,\hf_s$ are rainbow matching free, then there exists $1\leq t\leq s$ such that
\[
  |\hf_t|<\frac{6sk_1}{n_1}\prod_{1\leq j\leq \ell}\binom{n_j}{k_j}.
\]
\end{lem}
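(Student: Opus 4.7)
The strategy is to prove the contrapositive by a probabilistic argument producing a rainbow matching. Suppose $|\hf_t|\ge \tfrac{6sk_1}{n_1}\prod_{j=1}^\ell\binom{n_j}{k_j}$ for every $t\in[s]$. Set $m=\lfloor n_1/k_1\rfloor$, so $m\ge 3s$, and sample two independent uniformly random $m$-matchings $L=\{L_1,\dots,L_m\}$ in $\binom{V_1}{k_1}$ and $R=\{R_1,\dots,R_m\}$ in $\sqcup_{i=2}^\ell\binom{V_i}{k_i}$; note that $L$ and $R$ live on disjoint vertex sets. For each $t\in[s]$ form the bipartite graph $G_t$ with bipartition $L\cup R$ in which $L_a$ and $R_b$ are adjacent exactly when $L_a\cup R_b\in\hf_t$. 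A rainbow matching in $G_1,\dots,G_s$ produced by Lemma \ref{lem:aharoni-howard} consists of $s$ edges $(L_{a_t},R_{b_t})$ with pairwise distinct $a_t$'s and pairwise distinct $b_t$'s, so the sets $F_t=L_{a_t}\cup R_{b_t}\in\hf_t$ are automatically pairwise disjoint and constitute a rainbow matching for $\hf_1,\dots,\hf_s$, contradicting the hypothesis. Since $m\ge s$, it therefore suffices to show that with positive probability $e(G_t)>(s-1)m$ holds for every $t$.

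A direct symmetry calculation yields $\ex[e(G_t)]=m^2|\hf_t|\big/\prod_j\binom{n_j}{k_j}$, and the assumed bound on $|\hf_t|$ together with $\tfrac{mk_1}{n_1}\ge 1-\tfrac{k_1}{n_1}\ge 1-\tfrac{1}{3s}$ gives $\ex[e(G_t)]\ge(6s-2)m$, a margin of $(5s-1)m$ above the Aharoni--Howard threshold. The main work is to show that $e(G_t)$ stays within this margin of its mean, simultaneously for all $t$. I would use the law of total variance. Writing $e(G_t)=\sum_A w_A(R)\,\mathbf{1}[A\in L]$ with $w_A(R)=|\{b:A\cup R_b\in\hf_t\}|$, the inner piece $\Var_L(e(G_t)\mid R)$ is a quadratic form on the Kneser graph $KG(n_1,k_1)$ which Lemma \ref{lem:alon-chung} combined with Lovasz's eigenvalue formula (Lemma \ref{lem:lovasz1979shannon}) bounds by $\tfrac{m}{\binom{n_1}{k_1}}\bigl(1+\tfrac{m\lambda_1}{d_1}\bigr)\sum_A w_A(R)^2$, where $d_1,\lambda_1$ are the degree and second-largest absolute eigenvalue of $KG(n_1,k_1)$. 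The outer piece $\Var_R(\ex_L[e(G_t)\mid R])$ is handled identically on the Kneser-type graph over $\sqcup_{i\ge 2}\binom{V_i}{k_i}$, which is a tensor product of Kneser graphs whose spectrum factors by Lemma \ref{lem:lovasz1979shannon}. Since the spectral ratio $\lambda/d=k/(n-k)$ of each Kneser factor obeys $m\lambda/d\le 2mk_1/n_1\le 2$ --- here the ordering $n_1/k_1\le\cdots\le n_\ell/k_\ell$ is used essentially to transfer the gap on $V_1$ to the tensor factor --- both variance pieces come out of order $sm^2$; Chebyshev's inequality and a union bound over $t\in[s]$ then deliver the required positive probability.

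The main obstacle is calibrating the constants so that the union bound over $s$ families indeed yields probability strictly less than $1$; this is what fixes the constant $6$ in the statement. Technically the delicate step is the worst-case inequality $\sum_A w_A(R)^2\le m\cdot|\{(A,b):A\cup R_b\in\hf_t\}|$, and its $R$-analogue $\sum_B g(B)^2\le \binom{n_1}{k_1}|\hf_t|$ with $g(B)=|\{A:A\cup B\in\hf_t\}|$, which are typically not tight and may have to be sharpened, or one may replace Chebyshev by a Bernstein-type concentration inequality for intersections with random matchings in the spirit of Frankl--Kupavskii \cite{frankl2018erd} to absorb the lost constant. If even that proves too loose, one can alternatively shrink $m$ slightly (say to $\lfloor n_1/(k_1(1+\varepsilon))\rfloor$) to enlarge the Aharoni--Howard margin at the cost of a correspondingly worse absolute constant in the final bound.
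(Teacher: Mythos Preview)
Your proposal is essentially the paper's proof: random $m$-matchings on the two sides, the bipartite graphs $G_t$, Aharoni--Howard, Chebyshev plus a union bound, with the variance controlled spectrally via Alon--Chung and the Kneser (tensor) eigenvalues. The only difference is bookkeeping: instead of the law of total variance, the paper splits $\Var(X)=\sum_{i,j,i',j'}\theta(i,j,i',j')$ into the four cases according to whether $i=i'$ and $j=j'$; in the dominant case $i\neq i',\,j\neq j'$ Alon--Chung on the full tensor Kneser graph gives $\theta\le\frac{k_1}{n_1-k_1}\alpha$, while in the three remaining cases one gets $\theta\le\alpha$ directly, so $\Var(X)\le 3\alpha m^3$ and the constant $6$ closes without any of the sharpening or fallback options you worry about.
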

\proof Suppose that
\[ |\hf_t|= \frac{6sk_1}{n_1}\prod_{1\leq j\leq \ell}\binom{n_j}{k_j}\]
for all $1\leq t\leq s$. Set $m=\lfloor n_1/k_1\rfloor$. Let $\ha=\{A_{1},\dots,A_{m}\}$ be an $m$-matching chosen from $\binom{V_1}{k_1}$ uniformly at random and $\hb=\{B_{1},\dots,B_{m}\}$ be an $m$-matching chosen from $\sqcup_{i=2}^\ell \binom{V_i}{k_i}$ uniformly at random. For each $t=1,\dots,s$, we construct a bipartite graph $G_t$ with partite sets $\ha$, $\hb$ where we have an edge $(A_i,B_j)$ iff $A_i\cup B_j \in \hf_t$. Note that a rainbow matching in $G_1,G_2,\dots,G_s$ gives a rainbow matching in $\hf_1,\hf_2,\dots,\hf_s$. Thus, by Lemma \ref{lem:aharoni-howard} and the union bound, it suffices to show that
\begin{align*}
  \Pr(e(G_t)\leq (s-1)m)<\frac 1s
\end{align*}
for each $1\leq t\leq s$. Let $X=e(G_t)$ and $X_{ij}$ be the indicator function of the event that $A_i\cup B_j\in\hf_t$. Clearly, we have
\[
X=\sum_{1\leq i,j\leq m} X_{ij}.
\]
Set $\alpha=|\hf_t|/\prod_{j=1}^\ell\binom{n_j}{k_j}$. Take expectation on both sides, we obtain that
\[
\ex(X)  =  \sum_{1\leq i,j\leq m} \Pr[A_i\cup B_j\in\hf_t] = \sum_{1\leq i,j\leq m} \frac{|\hf_t|}{\prod_{j=1}^\ell\binom{n_j}{k_j}} = \alpha m^2.
\]
Since $n_1/k_1\geq 3s$, we have
\begin{align}\label{ineq-4}
\alpha m =\frac{6sk_1}{n_1} \cdot \left\lfloor \frac{n_1}{k_1}\right\rfloor \geq \frac{6sk_1}{n_1} \cdot  \frac{n_1-k_1}{k_1} \geq 6s\left(1-\frac{k_1}{n_1}\right)\geq 6s-2.
\end{align}
By the Chebyshev inequality, it follows that
\begin{align}\label{eq:chebyshev}
  \Pr(X\leq (s-1)m)\leq \Pr\left(X\leq \frac{\ex (X)}{6}\right) \leq \Pr\left(|X-\ex(X)|\geq \frac{5}{6}\ex(X)\right)\leq  \frac{36}{25} \cdot \frac{\Var(X)}{\alpha^2m^4}.
\end{align}
Now we need to give an upper bound for $\Var(X)$. For  $1\leq i,i',j,j'\leq m$, define
\[
\theta(i,j,i',j') = \Pr\left(A_i\cup B_j, A_{i'}\cup B_{j'}\in \hf_t\right)-\alpha^2.
\]
Then,
\begin{align}\label{eq:3.3}
\Var(X)&=\ex(X^2)-\ex(X)^2\nonumber\\[5pt]
& =  \sum_{i,j,i',j'\in [m]} \left(\ex [X_{ij}X_{i'j'}] - \ex [X_{ij}]\ex[X_{i'j'}]\right)\nonumber\\[5pt]
&= \sum_{i,j,i',j'\in [m]}  \left(\Pr\left(A_i\cup B_j, A_{i'}\cup B_{j'}\in \hf_t\right)-\Pr\left(A_i\cup B_j\in \hf_t\right)\Pr\left( A_{i'}\cup B_{j'}\in \hf_t\right)\right)\nonumber\\[5pt]
&= \sum_{i,j,i',j'\in [m]} \theta(i,j,i',j').
\end{align}
We derive upper bounds on $\theta(i,j,i',j')$ for $i,j,i',j'\in [m]$ by distinguishing four cases.

{\bf Case 1.} $i\neq i'$ and $j\neq j'$.

Let $H$ be a graph on the vertex set $\sqcup_{i=1}^\ell \binom{V_i}{k_i}$
and with the edge set formed by pairs of disjoint sets in $\sqcup_{i=1}^\ell \binom{V_i}{k_i}$. Let $\lambda$ be the second largest absolute eigenvalue of adjacent matrix of $H$. Frankl \cite{frankl1996erdos} observed that the adjacency matrix of $H$ is the Kronecker product of the adjacency matrices of the Kneser graphs. By Lemma \ref{lem:lovasz1979shannon}, it follows that
\[
\lambda=\frac{k_1}{n_1-k_1}\prod_{1\leq p\leq \ell}\binom{n_p-k_p}{k_p}.
\]
Since $i\neq i'$ and $j\neq j'$, we have $(A_i\cup B_j)\cap(A_{i'}\cup B_{j'})=\emptyset$. Note that $A_{1},\dots,A_{m}$ and $B_{1},\dots,B_{m}$ are chosen uniformly at random. The probability that $A_i\cup B_j, A_{i'}\cup B_{j'}\in\hf_t$ equals to the probability that a uniform  chosen edge from $E(H)$ is an edge of the induce subgraph $H[\hf_t]$, that is,
\begin{align*}
  \Pr\Big(A_i\cup B_j, A_{i'}\cup B_{j'}\in \hf_t\Big)=\frac{e(H[\hf_t])}{e(H)}.
\end{align*}
Let $D$ be the degree of $H$ and $N$ be the number of vertices of $H$. It is easy to see that $D=\prod_{1\leq p\leq \ell}\binom{n_p-k_p}{k_p}$ and $N=\prod_{1\leq p\leq \ell}\binom{n_p}{k_p}$. Since $H$ is $D$-regular, we have $e(H) = \frac{DN}{2}$.   By Lemma \ref{lem:alon-chung}, we have
\begin{align}
\theta(i,j,i',j')  \leq \left|\frac{e(H[\hf_t])}{e(H)}-\alpha^2\right|\leq \frac{\lambda}{D}\alpha(1-\alpha)\leq \frac{k_1}{n_1-k_1}\alpha.\label{eq:lemMain-1}
\end{align}

{\bf Case 2.} $i\neq i'$ and $j=j'$.

For $B\in \sqcup_{i=2}^\ell \binom{V_i}{k_i}$, define
$\hf_t(B)=\left\{K\in \binom{V_1}{k_1}\colon K\cup B\in \hf_t\right\}$. Let $H$ be the Kneser graph on the vertex set $\binom{V_1}{k_1}$.
Since $i\neq i'$ and $j=j'$, the event that $A_i\cup B_j, A_{i'}\cup B_{j'}\in \hf_t$ is equivalent to the event that a uniform chosen edge of $H$ is an edge in $H[\hf_t(B_j)]$. Thus, \begin{align}\label{eq:24-3}
  \Pr\left(A_i\cup B_j, A_{i'}\cup B_{j'}\in \hf_t\right)&=\sum_{B\in\bigsqcup_{i=2}^\ell \binom{V_i}{k_i}}\Pr\left(A_i,A_i'\in \hf_t(B)|B_j=B\right)\Pr\left(B_j=B\right)\nonumber\\[5pt]
  &=\sum_{B\in\bigsqcup_{i=2}^\ell \binom{V_i}{k_i}}\frac{e(H[\hf_t(B)])}{e(H)}\cdot\Pr\left(B_j=B\right).
\end{align}
Set $\alpha(B)=|\hf_t(B)|/|\binom{n_1}{k_1}|$.
Let $D$ be the degree of $H$ and $N$ be the number of vertices in $H$. Clearly, $D=\binom{n_1-k_1}{k_1} \mbox{ and } N=\binom{n_1}{k_1}$.
Let $\lambda$ be the second largest absolute eigenvalue of adjacent matrix of $H$. By Lemma \ref{lem:lovasz1979shannon}, we have $\lambda=\binom{n_1-k_1-1}{k_1-1}$. By lemma \ref{lem:alon-chung}, it follows that
\begin{align}\label{eq:27-1}
  \left|\frac{e(H[\hf_t(B)])}{e(H)}-\alpha^2(B)\right|\leq \frac{\lambda}{D}\alpha(B)(1-\alpha(B)) \leq \alpha(B)(1-\alpha(B)).
\end{align}
By \eqref{eq:24-3} and \eqref{eq:27-1}, we obtain that
\begin{align}
\theta(i,j,i',j')
  & = \Pr\left(A_i\cup B_j, A_{i'}\cup B_{j'}\in \hf_t\right)-\alpha^2\nonumber\\[5pt]
  &\leq \frac{1}{\prod_{2\leq p\leq \ell}\binom{n_p}{k_p}}\sum_{B\in\bigsqcup_{i=2}^\ell \binom{V_i}{k_i}}\left(\alpha(B)(1-\alpha(B))+\alpha^2(B)\right) -\alpha^2\nonumber\\[5pt]
  &\leq \frac{1}{\prod_{2\leq p\leq \ell}\binom{n_p}{k_p}}\sum_{B\in\bigsqcup_{i=2}^\ell \binom{V_i}{k_i}}\alpha(B)\nonumber\\[5pt]
  &\leq \frac{1}{\prod_{2\leq p\leq \ell}\binom{n_p}{k_p}}\sum_{B\in\bigsqcup_{i=2}^\ell \binom{V_i}{k_i}}\frac{|\hf_t(B)|}{\binom{n_1}{k_1}}\nonumber\\[5pt]
   &\leq \frac{1}{\prod_{1\leq p\leq \ell}\binom{n_p}{k_p}}\sum_{B\in\bigsqcup_{i=2}^\ell \binom{V_i}{k_i}}|\hf_t(B)|\nonumber\\[5pt]
      &\leq \frac{|\hf_t|}{\prod_{1\leq p\leq \ell}\binom{n_p}{k_p}}\nonumber\\[5pt]
  &= \alpha\label{eq:lemMain-2}
\end{align}

{\bf Case 3.} $i=i'$ and $j\neq j'$.

Similarly, let $H$ be a graph on the vertex set $\sqcup_{i=2}^\ell \binom{V_i}{k_i}$
and with the edge set formed by pairs of disjoint sets in $\sqcup_{i=2}^\ell \binom{V_i}{k_i}$.  Let $\lambda$ be the second largest absolute eigenvalue of adjacent matrix of $H$. Since the adjacency matrix of $H$ is the Kronecker product of the adjacency matrices of the Kneser graphs. By Lemma \ref{lem:lovasz1979shannon}, it follows that
\[
\lambda=\frac{k_2}{n_2-k_2}\prod_{2\leq p\leq \ell}\binom{n_p-k_p}{k_p}.
\]
For each $A\in \binom{V_1}{k_1}$, define $\hf_t(A)=\left\{K\in \sqcup_{i=2}^\ell \binom{V_i}{k_i}\colon A\cup K\in \hf_t\right\}$.
Set $\alpha(A)=|\hf_t(A)|/ \prod_{2\leq p\leq \ell}\binom{n_p}{k_p}$.
Let $D$ be the degree of $H$ and let $N$ be the number of vertices in $H$. Clearly,
$D=\prod_{2\leq p\leq \ell}\binom{n_p-k_p}{k_p}$ and $N=\prod_{2\leq p\leq \ell}\binom{n_p}{k_p}$.
Then,
\begin{align*}
  \Pr\left(A_i\cup B_j, A_{i'}\cup B_{j'}\in \hf_t\right)=\sum_{A\in\binom{V_1}{k_1}}\frac{e(H[\hf_t(A)])}{e(H)}\cdot \Pr\left(A_i=A\right).
\end{align*}
By Lemma \ref{lem:alon-chung}, we have for $A\in \binom{V_1}{k_1}$
\[
\left|\frac{e(H[\hf_t(A)])}{e(H)}-\alpha^2(A)\right|\leq \frac{\lambda}{D}\alpha(A)(1-\alpha(A)) \leq \alpha(A)(1-\alpha(A)).
\]
Thus,
\begin{align}
 \theta(i,j,i',j')
  &\leq\Pr\left(A_i\cup B_j, A_{i'}\cup B_{j'}\in \hf_t\right)-\alpha^2\nonumber\\
  &\leq \frac{1}{\binom{n_1}{k_1}}\sum_{A\in\binom{V_1}{k_1}}\left(\alpha(A)(1-\alpha(A))+\alpha^2(A)\right)\nonumber\\
  &\leq  \frac{1}{\binom{n_1}{k_1}}\cdot\sum_{A\in\binom{V_1}{k_1}}\alpha(A)\nonumber\\
  &\leq \frac{1}{\binom{n_1}{k_1}}\sum_{A\in\binom{V_1}{k_1}}\frac{|\hf_t(A)|}{\prod_{2\leq p\leq \ell}\binom{n_p}{k_p}}\nonumber\\
  &= \alpha.\label{eq:lemMain-4}
\end{align}

{\bf Case 4.} $i= i'$ and $j=j'$.

In this case,  $\Pr\left(A_i\cup B_j,A_{i'}\cup B_{j'} \in \hf_t\right)=\Pr\left(A_i\cup B_j \in \hf_t\right)=\alpha$. Hence,
\begin{align}
  \theta(i,j,i',j')=\alpha-\alpha^2\leq \alpha.\label{eq:lemMain-3}
\end{align}

Substitute \eqref{eq:lemMain-1}, \eqref{eq:lemMain-2}, \eqref{eq:lemMain-4} and \eqref{eq:lemMain-3} into \eqref{eq:3.3}, we have
\[
  \Var(X)= \sum_{i,j,i',j'\in [m]} \theta(i,j,i',j')\leq \frac{k_1}{n_1-k_1}\alpha m^{2}(m-1)^2+2\alpha m^{3}+\alpha m^2.
\]
Note that $m=\lfloor\frac{n_1}{k_1}\rfloor\leq \frac{n_1}{k_1}$ implies $ \frac{k_1}{n_1-k_1} \leq \frac{1}{m-1}$. It follows that
\begin{align}
   \Var(X)\leq \frac{1}{m-1}\alpha m^2(m-1)^2+2\alpha m^{3}+\alpha m^2 = 3\alpha m^3.
  \label{eq:var}
\end{align}
Substitute  \eqref{eq:var} into \eqref{eq:chebyshev}, we arrive at
\[
\Pr(X\leq (s-1)m)\leq \frac{36}{25}\cdot\frac{3\alpha m^3}{\alpha^2m^{4}}= \frac{36\cdot 3}{25\alpha m}.
\]
Moreover, \eqref{ineq-4} implies that $\alpha m\geq 6s-2\geq 5s$. Thus, we obtain that
\[
\Pr(X\leq (s-1)m)\leq  \frac{36\times 3}{25\alpha m}\leq \frac{36\times 3}{25\times 5} \cdot \frac{1}{s}<\frac{1}{s},
\]
and the lemma follows.\qed

The following lemma shows that the shifting operator preserves the rainbow matching free property of a family of uniform hypergraphs, which is due to Huang, Loh and Sudakov \cite{huang2012size}.

\begin{lem}[Huang-Loh-Sudakov \cite{huang2012size}]\label{shifted-rainbow}
If the families $\hf_1, \dots , \hf_s\subseteq \binom{[n]}{k}$ are rainbow matching free and $i,j\in [n]$ with $i< j$, then $S_{ij}(\hf_1), . . . , S_{ij}(\hf_s)$ are still rainbow matching free.
\end{lem}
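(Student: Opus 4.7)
My plan is to argue by contradiction: suppose that $S_{ij}(\hf_1),\dots,S_{ij}(\hf_s)$ admit a rainbow matching $F_1,\dots,F_s$ with $F_t\in S_{ij}(\hf_t)$ pairwise disjoint, and show that one can massage it into a rainbow matching of the original families $\hf_1,\dots,\hf_s$. The first step is to partition $[s]$ into $T_a=\{t:F_t\in\hf_t\}$ and $T_b=\{t:F_t\in S_{ij}(\hf_t)\setminus\hf_t\}$. For each $t\in T_b$ the definition of $S_{ij}$ forces $i\in F_t$, $j\notin F_t$, and $G_t:=(F_t\setminus\{i\})\cup\{j\}\in\hf_t$. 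Because the $F_t$'s are pairwise disjoint and every type-$b$ set contains $i$, at most one index is of type $b$, i.e.\ $|T_b|\le 1$.

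If $T_b=\emptyset$ we are already done: the matching lives in $\hf_1,\dots,\hf_s$, contradicting rainbow-matching freeness. So I would assume $T_b=\{t_0\}$ and first dispose of the easy subcase where no other $F_t$ contains $j$. In that situation I substitute $G_{t_0}$ for $F_{t_0}$; disjointness is preserved (we have swapped out $i$ for an unused $j$), every set in the new collection lies in its own $\hf_t$, and the desired contradiction follows.

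The delicate subcase is when some $F_{t_1}$ with $t_1\ne t_0$ contains $j$. By disjointness $F_{t_1}$ cannot contain $i$, so $t_1\in T_a$, i.e.\ $F_{t_1}\in\hf_{t_1}$. The key observation is that $G_{t_1}:=(F_{t_1}\setminus\{j\})\cup\{i\}$ must then already belong to $\hf_{t_1}$ — otherwise the rule defining $S_{ij}$ would have replaced $F_{t_1}$ by $G_{t_1}$, making $F_{t_1}\notin S_{ij}(\hf_{t_1})$. With this fact in hand I perform a simultaneous double swap, replacing $F_{t_0}$ by $G_{t_0}\in\hf_{t_0}$ and $F_{t_1}$ by $G_{t_1}\in\hf_{t_1}$; the elements $i$ and $j$ only exchange homes between the two slots $t_0$ and $t_1$, and by the original disjointness neither occurs in any other $F_t$, so the new collection is still a rainbow matching, now inside $\hf_1,\dots,\hf_s$ — contradiction.

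The main obstacle I anticipate is exactly this two-swap step: verifying that $G_{t_1}\in\hf_{t_1}$ relies on the fine print of the shifting definition (the three clauses under which $S_{ij}$ fixes $F_{t_1}$), and verifying that no new disjointness violation is introduced requires tracking the elements $i,j$ across all $s$ sets. Everything else — the typology of $F_t$'s, the uniqueness of the index containing $i$, and the single-swap case — is straightforward bookkeeping.
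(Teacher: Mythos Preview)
Your argument is correct. The case analysis is complete: the observation $|T_b|\le 1$ follows immediately from disjointness, the single-swap case is straightforward, and in the double-swap case your key claim that $G_{t_1}\in\hf_{t_1}$ is justified exactly as you say --- since $j\in F_{t_1}$, $i\notin F_{t_1}$, and $F_{t_1}$ survives in $S_{ij}(\hf_{t_1})$, the only clause of the shifting rule that can apply is the one forcing $(F_{t_1}\setminus\{j\})\cup\{i\}\in\hf_{t_1}$. The disjointness verification after the double swap is also sound, because $i$ and $j$ each appear in exactly one of the original $F_t$'s.

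Note, however, that the paper does not supply its own proof of this lemma: it is stated with attribution to Huang, Loh and Sudakov \cite{huang2012size} and used as a black box. Your write-up is essentially the standard argument from that reference, so there is nothing to compare against within the present paper.
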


\begin{lem}\label{lem:main-2}
Let $\ell,s,n_1,\dots,n_\ell,k_1,\dots,k_\ell$ be integers with  $n_i\geq 8\ell^2k_i^2s$ for $i=1,\dots,\ell$. Let  $\hf_1,\hf_2,\dots,\hf_s\subseteq \sqcup_{i=1}^\ell \binom{V_i}{k_i}$ with $|V_i|=n_i$. If $\hf_1,\hf_2,\dots,\hf_s$ are rainbow matching free, then there exists $1\leq t\leq s$ such that
\[
  |\hf_t|\leq  \prod_{i=1}^\ell\binom{n_i}{k_i}- \min_{x_1+\dots+x_\ell=s-1}\prod_{i=1}^\ell\binom{n_i-x_i}{k_i}.
\]
\end{lem}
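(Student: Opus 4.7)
The plan is to prove Lemma~\ref{lem:main-2} by induction on $s$, in parallel with the proof of Lemma~\ref{lem3}, substituting Lemma~\ref{lem:main} for Lemma~\ref{lem2}. The base case $s=1$ is immediate, since ``rainbow matching free'' at $s=1$ just says $\hf_1=\emptyset$ and the RHS equals $0$. For the inductive step, Lemma~\ref{shifted-rainbow} lets me assume each $\hf_t$ is shifted, and a re-labeling of the parts lets me assume $n_1/k_1\leq\cdots\leq n_\ell/k_\ell$. The argument then splits into two cases, depending on whether deleting $v_{i,1}$ produces a rainbow-matching-free instance on $s-1$ of the families.

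Case 1. If for some $i\in[\ell]$ and $t^*\in[s]$ the $s-1$ families $\{\hf_t(\overline{v_{i,1}}):t\neq t^*\}$ are rainbow matching free on the product with $V_i$ replaced by $V_i\setminus\{v_{i,1}\}$, then the size hypothesis is inherited ($n_i-1\geq 8\ell^2k_i^2(s-1)$), and the induction at level $s-1$ gives some $t'\neq t^*$ with
\[|\hf_{t'}(\overline{v_{i,1}})|\leq\binom{n_i-1}{k_i}\prod_{j\neq i}\binom{n_j}{k_j}-\min_{y_1+\cdots+y_\ell=s-2}\binom{n_i-1-y_i}{k_i}\prod_{j\neq i}\binom{n_j-y_j}{k_j}.\]
Adding the trivial $|\hf_{t'}(v_{i,1})|\leq\binom{n_i-1}{k_i-1}\prod_{j\neq i}\binom{n_j}{k_j}$ and re-indexing via $x_i=y_i+1$, $x_j=y_j$ for $j\neq i$ yields $|\hf_{t'}|\leq\prod_j\binom{n_j}{k_j}-\min_{\sum x_j=s-1}\prod_j\binom{n_j-x_j}{k_j}$.

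Case 2. Otherwise, for every $i$ and $t^*$ there is a rainbow $(s-1)$-matching $M_{i,t^*}$ in $\{\hf_t(\overline{v_{i,1}}):t\neq t^*\}$. Every edge of $\hf_{t^*}$ through $v_{i,1}$ must meet $\cup M_{i,t^*}$ (else extend to a rainbow $s$-matching), and the telescoping estimate used in the proof of Lemma~\ref{lem3} gives
\[|\hf_{t^*}(v_{i,1})|\leq\frac{k_i}{n_i}(s-1)\sum_{p=1}^\ell\frac{k_p^2}{n_p}\prod_{j=1}^\ell\binom{n_j}{k_j}.\]
Independently, shiftedness implies that $\{\hf_t[v_{i,s}]:t\in[s]\}$ is rainbow matching free: a rainbow matching $E_t\in\hf_t[v_{i,s}]$ of size $s$ would lift through $\hf_t[v_{i,s}]\subseteq\hf_t[v_{i,t}]$ to the pairwise disjoint collection $\{E_t\cup\{v_{i,t}\}\}_{t\in[s]}\subseteq\hf_t$, contradicting rainbow-matching-freeness of $\{\hf_t\}$. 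Applying Lemma~\ref{lem:main} on the smaller direct product (whose parameters still satisfy the hypothesis $3s\leq n'_j/k'_j$) yields, for each $i$, some $t(i)\in[s]$ with $|\hf_{t(i)}[v_{i,s}]|<6sk_1 k_i/(n_1 n_i)\prod_j\binom{n_j}{k_j}$.

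To finish Case~2, pick $t_0\in[s]$ by pigeonhole so that $t(i)=t_0$ for at least $\lceil\ell/s\rceil$ of the indices $i$ (call these ``good''); for a ``bad'' $i$, fall back on $|\hf_{t_0}[v_{i,s}]|\leq|\hf_{t_0}(v_{i,1})|$ via shiftedness. Using the Case~2 rainbow $(s-1)$-matching in $\{\hf_t:t\neq t_0\}$, shifted to its initial segment $\bigcup_i\{v_{i,1},\dots,v_{i,(s-1)k_i}\}$, as a vertex cover of $\hf_{t_0}$, one gets
\[|\hf_{t_0}|\leq\sum_{i=1}^\ell\bigl[(s-1)|\hf_{t_0}(v_{i,1})|+(s-1)(k_i-1)|\hf_{t_0}[v_{i,s}]|\bigr].\]
Plugging in the bounds above and exploiting $n_i\geq 8\ell^2 k_i^2 s$ (so that $\sum_p k_p^2/n_p\leq 1/(8\ell s)$ and $2(s-1)k_i^2/n_i\leq 1/(4\ell^2)$) reduces the RHS to $\frac{s-1}{2}\binom{n_1-1}{k_1-1}\prod_{j\geq 2}\binom{n_j}{k_j}$. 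Inequality~\eqref{ineq-6} then upgrades this to $(s-1)\binom{n_1-s+1}{k_1-1}\prod_{j\geq 2}\binom{n_j}{k_j}$, and Pascal's identity identifies it with $\bigl(\binom{n_1}{k_1}-\binom{n_1-s+1}{k_1}\bigr)\prod_{j\geq 2}\binom{n_j}{k_j}$, which by the corner-min computation from the proof of Theorem~\ref{main-1} equals $\prod_j\binom{n_j}{k_j}-\min_{\sum x_j=s-1}\prod_j\binom{n_j-x_j}{k_j}$. The main obstacle is precisely this book-keeping: Lemma~\ref{lem:main} yields only a per-$i$ ``exists $t$'' bound, which forces the pigeonhole-split into good and bad indices, and the factor $6s$ of Lemma~\ref{lem:main} (versus the factor $s+1$ of Lemma~\ref{lem2}) together with the bad-$i$ residual is what necessitates the doubled constant $8\ell^2$ (versus $4\ell^2$ in Lemma~\ref{lem3}) in the hypothesis.
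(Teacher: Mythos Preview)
Your overall strategy matches the paper's --- induction on $s$, shifting, the same two-case split, and the same telescoping bound on $|\hf_r(v_{i,1})|$. The gap is in Case~2, in how you extract from Lemma~\ref{lem:main} a \emph{single} index $t$ that works for every $i\in[\ell]$.

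Applying Lemma~\ref{lem:main} separately to $\{\hf_t[v_{i,s}]:t\in[s]\}$ for each fixed $i$ produces an index $t(i)$ that depends on $i$; your pigeonhole step then picks a $t_0$ that is ``good'' for only $\lceil\ell/s\rceil$ of the indices. For a bad index $i$ you fall back to $|\hf_{t_0}[v_{i,s}]|\le|\hf_{t_0}(v_{i,1})|\le\frac{k_i}{n_i}(s-1)\bigl(\sum_p\frac{k_p^2}{n_p}\bigr)\prod_j\binom{n_j}{k_j}$, so the contribution $(s-1)(k_i-1)|\hf_{t_0}[v_{i,s}]|$ is at most $(s-1)^2\frac{(k_i-1)k_i}{n_i}\cdot\frac{1}{8\ell s}\prod_j\binom{n_j}{k_j}$. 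This carries no factor of $k_1/n_1$: if you use $k_i^2/n_i\le 1/(8\ell^2 s)$ you get a term of order $\prod_j\binom{n_j}{k_j}/(\ell^3 s^2)$, which cannot be absorbed into $\tfrac{s-1}{2}\binom{n_1-1}{k_1-1}\prod_{j\ge2}\binom{n_j}{k_j}$ once $n_1/k_1$ is large; if instead you use $k_i/n_i\le k_1/n_1$ to pull out that factor, you are left with an unbounded $(k_i-1)$. Either way the numerical reduction you assert does not close, and no choice of constant in place of $8\ell^2$ fixes it.

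The paper sidesteps this by making the different $i$'s live in a \emph{common} direct product. With $V_j'=V_j\setminus\{v_{j,1},\dots,v_{j,s}\}$, it sets
\[
\hf_r^*[v_{i,s}]=\{E\cup\{u\}: E\in\hf_r[v_{i,s}],\ u\in V_i'\setminus E\}\subseteq\bigsqcup_{j}\binom{V_j'}{k_j},
\]
restoring the $i$-th part to full size $k_i$. Now argue by contradiction: if for every $t$ there were some $i_t$ with $|\hf_t^*[v_{i_t,s}]|>\frac{6sk_1}{n_1}\prod_j\binom{n_j-s}{k_j}$, then all $s$ families $\hf_t^*[v_{i_t,s}]$ sit in the \emph{same} product $\sqcup_j\binom{V_j'}{k_j}$, so a single application of Lemma~\ref{lem:main} yields a rainbow matching $F_t\in\hf_t^*[v_{i_t,s}]$, which (after stripping the added vertex and using shiftedness to replace $v_{i_t,s}$ by $v_{i_t,t}$) lifts to a rainbow matching in $\hf_1,\dots,\hf_s$. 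Hence there is a single $t$ with $|\hf_t^*[v_{i,s}]|$ small for \emph{every} $i$; a double count converts this to a bound on $|\hf_t[v_{i,s}]|$, and the computation closes exactly as in Lemma~\ref{lem3} with the constant $8\ell^2$.
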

\begin{proof}
 Without loss of generality, we may assume that $n_1/k_1\leq n_2/k_2\leq \dots\leq n_\ell/k_\ell$. By Lemma \ref{shifted-rainbow}, we may assume that $\hf_i$ is shifted for all $i=1,\dots,s$. We prove the theorem by induction on $s$. The case $s=1$ is trivial. Assume that the lemma holds for the case $s-1$ and we have to show the lemma holds for the case $s$.

 Let $V_i=\{v_{i,j} \colon j\in [n_i]\}$. For every $x\in \cup_{i=1}^\ell V_i$ and $1\leq t\leq s$, define
 \[
 \hf_t(x)=\{F\setminus \{x\} \colon x\in F\in \hf_t\} \mbox{ and } \hf_t(\overline{x})=\{F \colon x\notin F\in \hf_t\}.
 \]
 We distinguish into two cases.

{\bf Case 1.} There exist $r\in [s]$ and $i\in [\ell]$ such that $\hf_1(\overline{v_{i,1}}),\dots,\hf_{r-1}(\overline{v_{i,1}}),\hf_{r+1}(\overline{v_{i,1}}),\dots,$\\
$\hf_s(\overline{v_{i,1}})$ are rainbow matching free.

By the induction hypothesis, there exists $t\in [s]\setminus\{r\}$ such that
\[ |\hf_{t}(\overline{v_{i,1}})|\le \binom{n_i-1}{k_i}\prod_{j\neq i}\binom{n_j}{k_j}- \min_{y_1+\dots+y_\ell=s-1} \binom{n_i-1-y_i}{k_i}\prod_{j\neq i}\binom{n_j-y_j}{k_j}.\]
Then,
\begin{align*}
  |\hf_{t}|
  &=|\hf_{t}(\overline{v_{i,1}})|+|\hf_{t}({v_{i,1}})|\\[5pt]
  &\le \binom{n_i-1}{k_i}\prod_{j\neq i}\binom{n_j}{k_j}- \min_{y_1+\dots+y_\ell=s-1} \binom{n_i-1-y_i}{k_i}\prod_{j\neq i}\binom{n_j-y_j}{k_j}+\binom{n_i-1}{k_i-1}\prod_{j\neq i}\binom{n_j}{k_j}\\[5pt]
  &=\prod_{j=1}^\ell\binom{n_j}{k_j}- \min_{y_1+\dots+y_\ell=s-1} \binom{n_i-1-y_i}{k_i}\prod_{j\neq i}\binom{n_j-y_j}{k_j}\\[5pt]
  &\le\prod_{j=1}^\ell\binom{n_j}{k_j}-\min_{x_1+x_2+\dots+x_\ell=s}\prod_{j=1}^\ell\binom{n_j-x_j}{k_j}
\end{align*}
and the lemma holds.

{\bf Case 2.} $\hf_1(\overline{v_{i,1}}),\dots,\hf_{r-1}(\overline{v_{i,1}}),\hf_{r+1}(\overline{v_{i,1}}),\dots,\hf_s(\overline{v_{i,1}})$ contain a rainbow matching for all $r\in [s]$ and  $i\in [\ell]$.

Let  $M$ be a rainbow matching in $\left\{\hf_1(\overline{v_{i,1}}),\dots,\hf_{r-1}(\overline{v_{i,1}}),\hf_{r+1}(\overline{v_{i,1}}),\dots,\hf_s(\overline{v_{i,1}})\right\}$  and let $U$ be the set of vertices covered by $M$. Since $\hf_1,\hf_2,\dots,\hf_s$ are rainbow matching free, each edge of $\hf_r(v_{i,1})$ intersects $U$. It follows that
\begin{align*}
|\hf_r({v_{i,1}})| \le \prod_{j=1}^\ell\binom{n_j'}{k_j'}-\prod_{j=1}^\ell\binom{n_j'-(s-1)k_j}{k_j'},
\end{align*}
where $n_j'=n_j, k_j'=k_j$ for all $j\neq i$ and $n_i'=n_i-1, k_i'=k_i-1$. Then,
\begin{align*}
|\hf_r({v_{i,1}})| &\le \sum_{p=1}^\ell\left[\binom{n_p'}{k_p'}-\binom{n_p'-(s-1)k_p}{k_p'}\right]\prod_{j\leq p-1}\binom{n_j'-(s-1)k_j}{k_j'}\prod_{j\geq p+1}\binom{n_j'}{k_j'}\\[5pt]
\leq & \sum_{p=1}^\ell(s-1)k_p\binom{n_p'-1}{k_p'-1}\prod_{j\leq p-1}\binom{n_j'-(s-1)k_j}{k_j'}\prod_{j\geq p+1}\binom{n_j'}{k_j'}\\[5pt]
\leq & \sum_{p=1}^\ell(s-1)k_p\binom{n_p'-1}{k_p'-1}\prod_{j\leq p-1}\binom{n_j'}{k_j'}\prod_{j\geq p+1}\binom{n_j'}{k_j'}\\[5pt]
\leq & \sum_{p=1}^\ell \frac{(s-1)k_pk_p'}{n_p'}\binom{n_p'}{k_p'}\prod_{j\leq p-1}\binom{n_j'}{k_j'}\prod_{j\geq p+1}\binom{n_j'}{k_j'}\\[5pt]
=&\left(\sum_{p=1}^\ell \frac{(s-1)k_pk_p'}{n_p'}\right) \prod_{j=1}^\ell\binom{n_j'}{k_j'}.
\end{align*}
Note that $\frac{k_i'}{n_i'} = \frac{k_i-1}{n_i-1} \leq \frac{k_i}{n_i}$ and $\binom{n_i'}{k_i'} =\frac{k_i}{n_i}\binom{n_i}{k_i}$. It follows that for every $i\in[\ell]$ and $r\in [s]$
\begin{align}\label{eq:27-3-14}
|\hf_r({v_{i,1}})| \le \frac{(s-1) k_i}{n_i}\left(\sum_{p=1}^\ell \frac{k_p^2}{n_p}\right) \prod_{j=1}^\ell\binom{n_j}{k_j}.
\end{align}

For $j\in[\ell]$,  let $S_j=\{v_{j,1},\dots,v_{j,s}\}$ and let $V_j' =V_j\setminus S_j$. For every $r\in [s]$ and $i\in[\ell]$, define
\[
\hf_r[v_{i,s}]= \left\{E\in  \bigsqcup_{j=1}^\ell\binom{V_j'}{k_j'}  \colon E\cup\{v_{i,s}\} \in \hf_r\right\},
\]
where $k_j'=k_j$ for all $j\neq i$ and $k_i'=k_i-1$.
Define
 $$\hf_r^*[v_{i,s}]=\left\{E\cup \{u\} \colon E\in \hf_r[v_{i,s}],\ u\in V_i'\setminus E\right\}.$$

{\bf Claim 1.} There exists $t\in [s]$ such that for all $i\in [\ell]$
\begin{align}\label{eq:26-1}
  |\hf_t^*[v_{i,s}]|\leq \frac{6sk_1}{n_1}\prod_{1\leq j\leq \ell}\binom{n_j-s}{k_j}.
\end{align}

\begin{proof}
Suppose to the contrary that for every $t\in [s]$ there is an $i_t \in [\ell]$ such that
\[
|\hf_t^*[v_{i_t,s}]|> \frac{6sk_1}{n_1}\prod_{1\leq j\leq \ell}\binom{n_j-s}{k_j}.
\]
Note that
\[
\hf_t^*[v_{i_t,s}] \subseteq \bigsqcup_{j=1}^\ell \binom{V_j'}{k_j}.
\]
By Lemma \ref{lem:main}, we conclude that $\hf_1^*[v_{i_1,s}],\dots,\hf_s^*[v_{i_s,s}]$ contain a rainbow matching. Let $F_1\in  \hf_1^*[v_{i_1,s}],\dots,F_s \in\hf_s^*[v_{i_s,s}]$ be such a matching. By the definition of $\hf_t^*[v_{i_t,s}]$, we see that there exist $E_1\subset F_1,\dots,E_s\subset F_s$ such that $E_1\cup \{v_{i_1,s}\}\in \hf_1, E_2\cup \{v_{i_2,s}\}\in \hf_2 \dots, E_s\cup \{v_{i_s,s}\} \in \hf_s$. Note that $\hf_1,\dots,\hf_s$ are all shifted. It follows that $E_1\cup \{v_{i_1,1}\}\in \hf_1, E_2\cup \{v_{i_2,2}\}\in \hf_2, \dots, E_s\cup \{v_{i_s,s}\} \in \hf_s$ is a rainbow matching, which contradicts the fact that $\hf_1,\dots,\hf_s$ are rainbow matching free.
\end{proof}

By Claim 1, there exists $t\in [s]$ such that \eqref{eq:26-1} holds for all $i\in [\ell]$. By the definition of $\hf_t^*[v_{i,s}]$, we see that for each $E\in \hf_t[v_{i,s}]$ there are  $|V_{i}'\setminus E|=(n_i-s)-(k_i-1)$ choices of $u$ such that $E\cup \{u\}\in \hf_t^*[v_{i,s}]$. Moreover, for each $F\in \hf_t^*[v_{i,s}]$ we have $|F\cap V_i'|=k_i$. It follows that at most $k_i$ sets in $\hf_t[v_{i,s}]$ are contained in $F$. Thus, we have
\begin{align}\label{eq:27-3-15}
  |\hf_t^*[v_{i,s}]|\geq \frac{n_i-s-k_i+1}{k_i}\cdot |\hf_t[v_{i,s}]|.
\end{align}
By \eqref{eq:26-1} and \eqref{eq:27-3-15}, we obtain that for all $i=1,2,\ldots,\ell$,
\begin{align}\label{eq:27-3-17}
  |\hf_t[v_{i,s}]|\leq \frac{6sk_1k_i}{n_1(n_i-s-k_i+1)}\prod_{1\leq j\leq \ell}\binom{n_j}{k_j}.
\end{align}
Note that $\hf_1,\dots,\hf_{t-1},\hf_{t+1},\dots,\hf_s$ contain a rainbow matching. Let $M'$ be such a matching and let $U'$ be the set of vertices covered by $M'$. By shiftedness, we may assume that
\[
U'=\bigcup_{i=1}^\ell\left\{v_{i,1},\dots,v_{i,(s-1)k_i}\right\}.
\]
Since $\hf_1,\dots,\hf_s$ are rainbow matching free, it follows that every edge of $\hf_t$ intersects $U'$. Thus,
\[
\hf_t = \sum_{i=1}^\ell \sum_{j=1}^{(s-1)k_i}\hf_t[v_{i,j}].
\]
By shiftedness, we have
\begin{align*}
\hf_t[v_{i,1}]\supseteq \hf_t[v_{i,2}] \supseteq \dots\supseteq \hf_t[v_{i,(s-1)k_i}]
\end{align*}
for $i=1,2,\dots,\ell$. Then
\begin{align}\label{ineq-5}
|\hf_t| \leq \sum_{i=1}^\ell(s-1)|\hf_t[{v_{i,1}}]|+\sum_{i=1}^\ell(k_i-1)(s-1)|\hf_t[v_{i,s}]|.
\end{align}
Substitute \eqref{eq:27-3-14} and \eqref{eq:27-3-17} into \eqref{ineq-5}, we arrive at
\begin{align*}
  |\hf_t|\leq \sum_{i=1}^\ell\Bigg((s-1)^2\Big(\sum_{p=1}^\ell\frac{k_p^2}{n_p}\Big)\frac{k_i}{n_i}\prod_{j=1}^\ell
  \binom{n_j}{k_j}+6s(s-1)\frac{k_i(k_i-1)}{n_i-s-k_i+1}\frac{k_1}{n_1}\prod_{j=1}^\ell\binom{n_j}{k_j}\Bigg).
  \end{align*}
 Since $\frac{k_i}{n_i}\leq \frac{k_1}{n_1}$, we have
 \[
  |\hf_t|\leq \frac{k_1}{n_1}\prod_{j=1}^\ell\binom{n_j}{k_j}\sum_{i=1}^\ell\Bigg((s-1)^2\sum_{p=1}^\ell\frac{k_p^2}{n_p}
  +6s(s-1)\frac{k_i(k_i-1)}{n_i-s-k_i+1}
\Bigg).
 \]
Since $n_i\geq 8\ell^2k_i^2s$ and  $\ell\geq 2$, we have
\[
\sum_{i=1}^\ell\Big((s-1)^2\sum_{p=1}^\ell\frac{k_p^2}{n_p}\Big)= \frac{(s-1)^2\ell^2}{8\ell^2s}\leq \frac{s-1}{8},
\]
and
\[ \sum_{i=1}^\ell\frac{6s(s-1)k_i(k_i-1)}{n_i-s-k_i+1}\leq \sum_{i=1}^\ell\frac{6s(s-1)k_i(k_i-1)}{16\ell k_i(k_i-1)s +16\ell k_is-s-k_i+1} \leq \frac{3(s-1)}{8} .
\]
Moreover, $\frac{k_i}{n_i}\leq \frac{k_1}{n_1}$. Hence, by inequality \eqref{ineq-6} we obtain that
\begin{align*}
  |\hf_t|
  &\leq\frac{(s-1)}{2}\binom{n_1-1}{k_1-1}\prod_{j=2}^\ell\binom{n_j}{k_j}\\
  &\leq(s-1)\binom{n_1-s+1}{k_1-1}\prod_{j=2}^\ell\binom{n_j}{k_j}\\
  &\leq\left(\binom{n_1}{k_1}-\binom{n_1-s+1}{k_1}\right)\prod_{j=2}^\ell\binom{n_j}{k_j},
\end{align*}
and this completes the proof.
\end{proof}

\begin{proof}[Proof of Theorem \ref{main-2}.] As the same to the proof of Theorem \ref{main-1}, we have
\[
 \min_{x_1+\dots+x_\ell=s}\prod_{i=1}^\ell\binom{n_i-x_i}{k_i} =\min_{1\leq i\leq \ell} \binom{n_i-s}{k_i}\prod_{j\neq i}\binom{n_j}{k_j}.
\]
Thus, Lemma \ref{lem:main-2} implies  the theorem.
\end{proof}

\bibliographystyle{abbrv}

\end{document}